\newcommand{\field}[1]{\ensuremath{\mathbb{#1}}}
\newcommand{\integer}{\field{Z}}
\newcommand{\naturals}{\field{N}}
\newcommand{\real}{\field{R}}
\newcommand{\complete}[2][n]{\ensuremath{K_{#2}^{#1}}}
\newcommand{\sph}{\ensuremath{\mathcal{S}}}
\newcommand{\disc}{\ensuremath{\mathcal{D}}}
\newcommand{\link}[2]{\ensuremath{\ell k(#1,#2)}}
\newcommand{\linktwo}[2]{\ensuremath{\ell k_2(#1,#2)}}
\newcommand{\ambient}[1][n]{\ensuremath{\real^{2#1+1}}}
\newcommand{\rthree}{\ensuremath{\real^3}}
\newcommand{\fancylink}[1]{{\ensuremath{\mathcal{#1}}}}
\newcommand{\JJ}{\fancylink{J}}
\newcommand{\LL}{\fancylink{L}}
\newcommand{\XX}{\fancylink{X}}
\newcommand{\YY}{\fancylink{Y}}
\newcommand{\ZZ}{\fancylink{Z}}
\DeclareMathOperator{\sign}{sign}
\newcommand{\vsphere}[3][n]{\ensuremath{\sigma_{#1}(#2,#3)}}
\newcommand{\key}[2][n]{\ensuremath{\kappa_{#1}(#2)}}
\newcommand{\keydisc}[3][n]{\ensuremath{\kappa_{#1}(#2,#3)}}
\def\co{\colon\thinspace}
\newcommand{\vect}[1]{\ensuremath{\mathbf{#1}}}
\newcommand{\vectg}[1]{\ensuremath{\boldsymbol{#1}}} % for greek vectors
\newcommand{\flapanetal}{Flapan~\etal~\cite{flapan-mellor-naimi2008}}
\newcommand{\etal}{~et~al.}
\newcommand{\path}{path}
\newtheorem{theorem}{Theorem}[section]
\newtheorem{lemma}[theorem]{Lemma}
\newtheorem{corollary}[theorem]{Corollary}
\newtheorem{proposition}[theorem]{Proposition}
\newtheorem*{lemma*}{Lemma}
\newtheorem*{question}{Question}
\numberwithin{equation}{section}
\theoremstyle{definition}
\newtheorem{definition}[theorem]{Definition}
\theoremstyle{remark}
\begin{document}

\title{Intrinsic linking with linking numbers of specified divisibility}
\author{Christopher Tuffley}
\date{\today}
\address{Institute of Fundamental Sciences, Massey University,
         Private Bag 11 222, Palmerston North 4442, New Zealand}
\email{c.tuffley@massey.ac.nz}

\subjclass[2010]{57Q45 (57M25, 57M15)}
\keywords{Intrinsic linking, complete $n$-complex, Ramsey Theory}

\begin{abstract}
Let $n$, $q$ and $r$ be positive integers, and let \complete{N}\ be the $n$--skeleton of an $(N-1)$--simplex. We show that for $N$ sufficiently large every embedding of \complete{N}\ in \ambient\ contains a link $L_1\cup\cdots\cup L_r$ consisting of $r$ disjoint $n$--spheres, such that the linking number $\link{L_i}{L_j}$ is a nonzero multiple of $q$ for all $i\neq j$.
This result is new in the classical case $n=1$ (graphs embedded in $\real^3$) as well as the higher dimensional cases $n\geq 2$; and since it 
implies the existence of a link  $L_1\cup\cdots\cup L_r$ such that $|\link{L_i}{L_j}|\geq q$ for all $i\neq j$, it also extends a result of Flapan\etal\ from $n=1$ to higher dimensions. Additionally, for $r=2$ we obtain an improved upper bound on the number of vertices required to force a two-component link $L_1\cup L_2$ such that \link{L_1}{L_2}\ is a nonzero multiple of $q$. Our new bound has growth $O(nq^2)$, in contrast to the previous bound of growth $O(\sqrt{n}4^nq^{n+2})$.
\end{abstract}

\maketitle

\section{Introduction}

In the early 1980s Sachs~\cite{sachs1983} and Conway and Gordon~\cite{conway-gordon1983} proved that every embedding of the complete graph $K_6$ in $\real^3$ contains a pair of disjoint cycles that form a non-trivial link --- a fact that is expressed by saying that $K_6$ is \emph{intrinsically linked}. Since then, a number of authors have shown that embeddings of larger complete graphs necessarily exhibit more complex linking behaviour, such as non-split many-component links~\cite{flapan-pommersheim-foisy-naimi2001,fleming-diesl2005}; two component links with linking number large in absolute value~\cite{flapan2002}; and two component links with linking number a non-zero multiple of a given integer~\cite{fleming2007,fleming-diesl2005}. 

Such \emph{Ramsey-type results} for intrinsic linking can also be shown to hold in higher dimensions. Let \complete{N}\ be the $n$--skeleton of an $(N-1)$-simplex, which we call the \emph{complete $n$--complex on $N$ vertices}. Then \complete{2n+4}\ is intrinsically linked, in the sense that every
embedding in \ambient\ contains a pair of disjoint $n$-spheres that
have nonzero linking number~\cite{lovasz-schrijver1998,taniyama2000}; and moreover, the results described above can all be extended to embeddings of sufficiently large complete $n$--complexes in \ambient~\cite{ramsey2013}. 

Flapan, Mellor and Naimi~\cite[Thm~1]{flapan-mellor-naimi2008} have shown
that intrinsic linking of graphs is arbitrarily complex, in the
following sense: Given positive integers $r$ and $\alpha$, every embedding
of a sufficiently large complete graph in $\real^3$ contains an
$r$--component link in which the linking number of each pair of
components is at least $\alpha$ in absolute value. The main goal of this
paper is to prove an analogue of this result in all dimensions, with the condition on the magnitude of the linking numbers replaced by a divisibility condition instead. Namely, we show that, given positive integers $r$ and $q$, every embedding of a sufficiently large complete $n$-complex in \ambient\ contains a link consisting of $r$ disjoint $n$--spheres, in which all pairwise linking numbers are nonzero multiples of $q$. 

This result is new in the classical case $n=1$ as well as the higher dimensional cases $n\geq 2$. Since a nonzero multiple of $q$ has magnitude at least $q$, it also extends the Flapan-Mellor-Naimi result to $n\geq 2$. The techniques used to prove it draw heavily on those of Flapan, Mellor and Naimi (for the construction of many-component links with all pairwise linking numbers nonzero), as well as those of our previous paper~\cite{ramsey2013} (for intrinsic linking with $n\geq 2$, and constructing links with linking numbers divisible by $q$). We also obtain a vastly improved upper bound on the number of vertices required in the case $r=2$. Our new bound has growth $O(nq^2)$, in contrast to the previous best bound~\cite[Thm~1.4]{ramsey2013} of growth $O(\sqrt{n}4^nq^{n+2})$.

We note that Flapan, Mellor and Naimi~\cite[Thm~2]{flapan-mellor-naimi2008} show further that intrinsic linking of complete graphs is arbitrarily complex in an even stronger sense: one can additionally require that the second co-efficient of the Conway polynomial of each
component has absolute value at least $\alpha$ as well. We do not consider knotting of the components in this paper. This is in part for reasons of dimension: knotting of $n$--spheres is a codimension two phenomenon, whereas linking of $n$--spheres occurs in codimension $n+1$. Thus, the only dimension in which we can consider intrinsic knotting and linking simultaneously is the classical case $n=1$, and we have not given this case separate consideration. To our knowledge there are at present no known divisibility results for intrinsic knotting, and we pose the following question:

\begin{question}
Let $q\geq 2$ be a positive integer.
Does there exist $n$ such that every embedding of $K_n$ in $\real^3$ contains a knot with second Conway co-efficient a nonzero multiple of $q$? 
\end{question}

\subsection{Statement of results}

Throughout this paper, an \emph{$r$--component link} means $r$
disjoint oriented $n$-spheres embedded in \ambient. Given a 2-component
link $L_1\cup L_2$ we will write \link{L_1}{L_2}\ for their linking number,
and $\linktwo{L_1}{L_2}$ for their linking number mod two.  For
$\{i,j\}=\{1,2\}$ the integral linking number is given by the homology
class $[L_i]$ in $H_n(\ambient-L_j;\integer)\cong\integer$.

Our main result is as follows:
\begin{theorem}
\label{modq.th}
Let $n$, $q$ and $r$ be positive integers, with $r\geq 2$. For $N$ sufficiently large 
every embedding of \complete{N}\ in $\real^{2n+1}$ contains an 
$r$-component link $L_1\cup\cdots\cup L_r$ such that, for every
$i\neq j$, $\link{L_i}{L_j}$ is a nonzero multiple of $q$. 
\end{theorem}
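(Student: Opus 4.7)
The plan is to prove Theorem~\ref{modq.th} by induction on the number of components $r$, using for $r=2$ a refined version of Theorem~1.4 of \cite{ramsey2013} that delivers the improved $O(nq^2)$ bound advertised in the abstract. The improvement should come from producing $q+1$ essentially vertex-disjoint candidate two-component links (each built using the basic linking construction from \cite{ramsey2013}), observing by pigeonhole that two of them agree modulo $q$, and then forming a ``difference'' link whose linking number is a nonzero multiple of $q$ via a prism/cylinder style divided-sphere construction; the macros $\divided$ and $\keydisc$ in the preamble suggest that such constructions are indeed central to the paper's toolkit.

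For the inductive step from $r-1$ to $r$, let $M$ be large enough that the inductive hypothesis applies to $\complete{M}$, and assume $N$ is much larger than $M$. I would first partition the vertex set of $\complete{N}$ into a small territory $V_0$ supporting a candidate first component $L_1$, together with many vertex-disjoint copies of $\complete{M}$ and a supply of small ``amplifier'' territories. Within a chosen copy of $\complete{M}$, combining the $r=2$ case with a divided-sphere amplification produces a distinguished $n$-sphere whose linking number with $L_1$ is a nonzero multiple of $q$, while leaving a large complete $n$-sub-complex of that copy unused. Applying the inductive hypothesis inside this residual sub-complex yields spheres $L_2,\ldots,L_r$ with all pairwise linking numbers nonzero multiples of $q$; the construction can be arranged so that $L_2$ is the distinguished $L_1$-linked sphere, so that $\link{L_1}{L_2}$ is already a nonzero multiple of $q$. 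To force $\link{L_1}{L_i}$ to be a nonzero multiple of $q$ for $i=3,\ldots,r$, I would connect-sum (or prism-attach) each $L_i$ with an $L_1$-linking gadget supported on an unused amplifier territory; since the amplifier is vertex-disjoint from $L_2,\ldots,L_r$, this adjustment alters only the $L_1$-linking numbers and leaves the internal pairwise linkings intact.

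The main obstacle is ensuring transparency of the divided-sphere amplification to pre-existing linking relations: the connect-summed amplifier must not inadvertently acquire linking with $L_2,\ldots,L_{i-1}$ that would perturb the pairwise linking numbers already established by induction. Arranging vertex-disjointness by reserving amplifier territories at the outset, together with a verification that linking numbers add under the prism/connect-sum construction, should handle this, but the geometric bookkeeping will need care. A secondary obstacle is the Ramsey-style growth of $N$ as a function of $r$ and $q$: each mod-$q$ pigeonhole extraction scales the required vertex count by a factor depending on $q$, each amplification adds a further multiplicative cost, and the $r-1$ iterations compound these, so careful accounting is needed to ensure a bound on $N$ that is finite (even if very large) and that specialises correctly to the $O(nq^2)$ bound claimed for $r=2$.
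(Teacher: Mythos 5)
Your proposal inducts directly on $r$ and then tries to ``repair'' each $\link{L_1}{L_i}$ for $i\geq 3$ by connect-summing $L_i$ with a gadget supported on a vertex-disjoint amplifier territory, arguing that vertex-disjointness keeps the other pairwise linking numbers intact. This is the point where the argument breaks: vertex-disjointness does \emph{not} imply vanishing linking number, so a gadget $G_i$ living on an unused territory can perfectly well link $L_2,\ldots,L_{i-1},L_{i+1},\ldots,L_r$, and then $\link{L_i\#G_i}{L_j}=\link{L_i}{L_j}+\link{G_i}{L_j}$ need no longer be a nonzero multiple of $q$. There is no way to fix one linking number in isolation after the fact; any connect sum is a global operation on the linking data. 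A secondary gap of the same flavour: even if $G_i$ did not link the other components, you would need $\link{L_1}{G_i}$ to take a value that makes $\link{L_1}{L_i}+\link{L_1}{G_i}$ a nonzero multiple of $q$, which requires a pigeonhole over many candidate gadgets --- and each candidate would again need to be verified not to perturb the other components. Finally, the statement that ``the construction can be arranged so that $L_2$ is the distinguished $L_1$-linked sphere'' is unsupported: the inductive hypothesis hands you some $(r-1)$-component link inside the residual complex; it gives you no control over which sphere it contains, and in particular no reason to expect it to contain a pre-selected sphere already known to link $L_1$ with a good linking number.

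The paper circumvents exactly this difficulty by \emph{never modifying components already built}. Instead of inducting on $r$, it strengthens the inductive statement: it tracks a link whose linking pattern contains the complete $(u+2)$--partite graph $H(u,v)$, with $u$ singleton parts (the already-finished components, pairwise linking in nonzero multiples of $q$) and two large bipartite parts $P_1,P_2$ of ``raw material'' (components that are $D$-large for a long path $D$ and that link everything appropriately mod $2$). Each inductive step (Proposition~\ref{stitchinglinks.prop}) uses the bipartite raw material to build one fresh component $Z$ whose linking numbers with all the finished components \emph{and} with the remaining raw material are simultaneously controlled, then moves $Z$ into the finished set; the base case $u=0$ is Proposition~\ref{bipartite.prop}. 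Because each new component is built from scratch out of the carried-along bipartite link, the finished components' pairwise linking numbers are never touched again. Any successful proof will need some device of this kind --- a strengthened inductive statement carrying ``future raw material'' --- rather than a post-hoc repair step, which is what your proposal is missing.
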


Since every nonzero multiple of $q$ has absolute value at least $q$, Theorem~\ref{modq.th} immediately gives us the following extension of Theorem~1 of \flapanetal\ to higher dimensions:

\begin{corollary}
Let $n$, $\lambda$ and $r$ be positive integers, with $r\geq 2$. For $N$ sufficiently large 
every embedding of \complete{N}\ in $\real^{2n+1}$ contains an 
$r$-component link $L_1\cup\cdots\cup L_r$ such that, for every
$i\neq j$, $|\link{L_i}{L_j}|\geq\lambda$.
\end{corollary}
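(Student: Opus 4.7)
The plan is to obtain this as an immediate consequence of Theorem~\ref{modq.th}, since the only work required is to rephrase a divisibility condition as a magnitude condition. The key observation is that if an integer $k$ is a \emph{nonzero} multiple of a positive integer $q$, then $|k|\geq q$. Thus a link whose pairwise linking numbers are all nonzero multiples of $q$ automatically has all pairwise linking numbers of absolute value at least $q$.

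Concretely, I would proceed as follows. Given positive integers $n$, $\lambda$ and $r\geq 2$, set $q=\lambda$ and apply Theorem~\ref{modq.th} to the triple $(n,q,r)=(n,\lambda,r)$. The theorem yields an integer $N_0=N_0(n,\lambda,r)$ such that, for every $N\geq N_0$, every embedding of \complete{N}\ in $\real^{2n+1}$ contains an $r$-component link $L_1\cup\cdots\cup L_r$ with $\link{L_i}{L_j}$ a nonzero multiple of $\lambda$ for all $i\neq j$. Applying the observation above to each pair $(i,j)$ gives $|\link{L_i}{L_j}|\geq\lambda$, as required. The same $N_0$ therefore serves as a bound for the corollary.

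Since the corollary is a direct logical consequence of Theorem~\ref{modq.th}, there is no genuine obstacle; the only subtlety is making sure one is allowed to substitute $q=\lambda$ freely, which is fine because Theorem~\ref{modq.th} is stated for \emph{every} positive integer $q$. In the write-up I would therefore keep the proof to one or two sentences, noting that the bound on $N$ inherited from Theorem~\ref{modq.th} in the case $r=2$ automatically gives the improved $O(n\lambda^2)$ growth advertised in the introduction.
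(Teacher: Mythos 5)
Your proposal is correct and matches the paper's own one-line argument exactly: set $q=\lambda$ in Theorem~\ref{modq.th} and observe that a nonzero multiple of $\lambda$ has absolute value at least $\lambda$. The only small inaccuracy is in your closing remark: the explicit $O(n\lambda^2)$ bound for $r=2$ comes from the separately proved Theorem~\ref{2component.th}, not from Theorem~\ref{modq.th} itself, which is stated without an explicit bound.
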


The $r=2$ case of Theorem~\ref{modq.th} is proved as Theorem~1.4 of~\cite{ramsey2013}, with an upper bound of growth $O(\sqrt{n}4^nq^{n+2})$ on the number of vertices required. We re-prove this result with a greatly improved bound with growth $O(nq^2)$:
\begin{theorem}
\label{2component.th}
For $r=2$, the conclusion of Theorem~\ref{modq.th} holds for 
\[
N\geq \key{q} = \begin{cases}
          24q^2, & n=1,   \\
          4q^2(2n+4)+n+\left\lceil\frac{4q^2-2}{n}\right\rceil+1, & n\geq 2.
          \end{cases}
\]
In other words, every embedding of \complete{\key{q}}\ in \ambient\ contains a two component link $L_1\cup L_2$ such that the linking number \link{L_1}{L_2}\ is a nonzero multiple of $q$. 
\end{theorem}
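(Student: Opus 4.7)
The plan is to find $4q^2$ disjoint copies of the minimal intrinsically linked complex \complete[n]{2n+4}\ inside the embedding of \complete{N}, extract a linked pair of $n$-spheres from each, and combine a suitably chosen subset of these pairs into a single two-component link $L_1\cup L_2$ whose linking number is a nonzero multiple of $q$. The count in \key{q}\ is engineered to supply both the $4q^2$ disjoint copies and the auxiliary vertices demanded by the combining step.

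First I would partition a subset of the $N$ vertices into $4q^2$ disjoint blocks $V_1,\ldots,V_{4q^2}$ of size $2n+4$ each, and reserve the remaining vertices as an auxiliary set $A$. By the intrinsic linking of \complete[n]{2n+4}\ (Lov\'asz--Schrijver/Taniyama), each $V_i$ contains a pair of disjoint oriented $n$-spheres $(J_i,J_i')$ with $\link{J_i}{J_i'}\neq 0$; after reorienting if necessary we may assume $\ell_i:=\link{J_i}{J_i'}>0$ for every $i$. Consider the $4q^2+1$ partial sums $s_k=\sum_{i=1}^k \ell_i$ modulo $q$. Since there are only $q$ residues, by pigeonhole there exist $0\leq a<b\leq 4q^2$ with $s_a\equiv s_b\pmod q$, and then
\[
s_b-s_a \;=\; \sum_{i=a+1}^b \ell_i
\]
is a strictly positive integer divisible by $q$ --- exactly the target value for a linking number.

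The principal remaining step is to realize this sum geometrically: to produce a pair of disjoint oriented $n$-spheres $L_1, L_2$ in the embedding with $\link{L_1}{L_2}=\sum_{i=a+1}^b \ell_i$. Using the vertices in $A$ I would build a collection of connecting $n$-discs (the ``key discs'' suggested by the notation $\keydisc{\cdot}{\cdot}$) and perform controlled connected sums on the $J_i$, $a<i\leq b$, to yield a single $n$-sphere $L_1$, and on the $J_i'$ to yield $L_2$. Provided these connecting discs can be chosen disjoint from each other and from the remaining spheres except along the connecting cells, linking number is additive over the connected sums and the resulting link satisfies $\link{L_1}{L_2}=\sum_{i=a+1}^b \ell_i$, a nonzero multiple of $q$ as required.

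The main obstacle is this combining step: showing that $A$ carries enough simplicial structure to house all the required connecting discs without introducing spurious cross-linkings, and deriving the explicit bound $n+\lceil(4q^2-2)/n\rceil+1$ on $|A|$ when $n\geq 2$. I would expect this count to emerge from starting with an $n$-simplex in $A$ (contributing one connecting cell and using $n+1$ vertices), then attaching $\lceil(4q^2-2)/n\rceil$ further vertices, each contributing roughly $n$ new $n$-faces to the complex by being joined to an appropriate selection of facets of the initial simplex --- together yielding the $\sim 4q^2$ connecting cells needed. The $n=1$ case requires no separate auxiliary set, since each connecting arc needs only a single extra vertex, and such vertices can be borrowed from within the blocks $V_i$ themselves.
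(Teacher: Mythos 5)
Your proposal has a genuine gap at the step where you realize the sum $\sum_{i=a+1}^b \ell_i$ as a linking number. If $L_1$ is a connect sum of $J_{a+1},\ldots,J_b$ and $L_2$ a connect sum of $J_{a+1}',\ldots,J_b'$, then as chains $L_1 = \sum_i J_i + (\text{bridging discs})$ and $L_2 = \sum_j J_j' + (\text{bridging discs})$, so
\[
\link{L_1}{L_2} = \sum_{i=a+1}^b\sum_{j=a+1}^b \link{J_i}{J_j'} + (\text{bridge contributions}),
\]
not $\sum_{i=a+1}^b\link{J_i}{J_i'}$. The off-diagonal terms $\link{J_i}{J_j'}$ for $i\neq j$ are completely uncontrolled: taking the $J_i,J_j'$ from disjoint copies of \complete{2n+4}\ makes them disjoint in the complex, but says nothing about their linking numbers in the given embedding. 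Your ``spurious cross-linkings'' worry is real, but it attaches to the original spheres, not just the connecting discs, and there is no room in $A$ to make it go away --- no choice of bridges can cancel $\sum_{i\neq j}\link{J_i}{J_j'}$.

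The paper's proof avoids this by working with a generalised key ring rather than a bipartite collection of pairs. Corollary~\ref{largekeys.cor} supplies a single ring $R$ and $q$ keys $Z_1,\ldots,Z_q$, each linking $R$ with positive linking number and each $D$-large for a path $D$ of length $q$. The pigeonhole is then applied to the partial sums of $\link{R}{Z_i}$, and the connect sum is performed only on one side: $Z = Z_a\#\cdots\#Z_b$, while $R$ stays fixed, so no cross-terms of the form $\link{Z_i}{Z_j}$ ever enter $\link{R}{Z}$. What does enter are the bridging spheres $F_a,\ldots,F_{b-1}$, and these are controlled by a second pigeonhole over the $n$-simplices of the path $D$, forcing $\link{R}{F_i}\equiv 0\bmod q$; this is precisely why the keys must be $D$-large with $D$ of length $q$, a requirement your proposal has no analogue of. Finally, the bound \key{q}\ is not derived from building a connecting complex on $A$ of the size you describe: it is the vertex count $4q^2(2n+4)+\vsphere{D}{4q^2}$ needed to produce the initial generalised key ring ($4q^2$ copies of \complete{2n+4}\ plus one large sphere $L$), and Corollary~\ref{largekeys.cor} shows the enlargement of the keys can be done by recycling the $\approx(4q^2-q)(n+2)$ vertices left over from the unused copies of \complete{2n+4}\ via Lemma~\ref{adddisc.lem}, so no further vertices are required.
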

We note that the bound of Theorem~\ref{2component.th} is equal to the best known upper bound on the number of vertices required to force the existence of a generalised key ring with $q$ keys (see Flapan~\etal~\cite[Lem.~1]{flapan-mellor-naimi2008} for the case $n=1$ (although they don't state this value explicitly), and Tuffley~\cite[Thm.~1.2]{ramsey2013} for $n\geq 2$). 

\subsection{Overview}

As is the case with most Ramsey-type results on intrinsic linking, 
Theorems~\ref{modq.th} and~\ref{2component.th} are proved by using the connect sum operation to combine simpler links into more complicated ones. To achieve the divisibility condition we will require the building block components to be ``large'', in the sense that they all contain two copies of a fixed suitably triangulated disc. The triangulation will not only need to have many $n$--simplices, but must also have a combinatorial structure analogous to a path in a graph. Accordingly, we call such a triangulated disc an \emph{$n$--path}. We give a precise definition of a path in Section~\ref{large.sec}, and then re-establish a number of known results on intrinsic linking to show that we can require the necessary components to be large in this sense. 

The bulk of the work required to prove Theorem~\ref{modq.th} is done in Proposition~\ref{stitchinglinks.prop}, which forms the main technical lemma of the paper. Section~\ref{technical.sec} is devoted to the proof of this. The proposition plays the role of Flapan, Mellor and Naimi's Lemma 2, and the statement and proof are heavily modelled on theirs, making modifications as needed for it to work in all dimensions and achieve the divisibility condition. From an arithmetic standpoint, realising the divisibility condition largely boils down to repeatedly applying the following simple number-theoretic observation, used by both Fleming~\cite{fleming2007} and Tuffley~\cite{ramsey2013}:
\begin{quotation}
\emph{Let $\ell_1,\ell_2,\ldots,\ell_q$ be integers. Then there exist $0\leq a<b\leq q$ such that 
\[
\sum_{i=a+1}^b\ell_i\equiv 0\bmod q.
\]}
\end{quotation}
The work then is in achieving this sum topologically, with the integers involved linking numbers with respect to some fixed sphere $S$. Paths and generalised key rings (links in which one component has nonzero linking number with all the others) play crucial roles in this.

With Proposition~\ref{stitchinglinks.prop} established it is a relatively simple matter to prove Theorem~\ref{modq.th}, and we do this in Section~\ref{main.sec}. The underlying argument is essentially that of Flapan, Mellor and Naimi's proof of their Theorem~1, using our Proposition~\ref{stitchinglinks.prop} in place of their Lemma~2, and with some additional considerations to ensure that the building block components are sufficiently large, in the sense described above.

Finally, we turn our attention to the two component case in Section~\ref{2component.sec}, and establish the improved bound of Theorem~\ref{2component.th}. This is done by simply improving the construction of the building block link used in our original proof~\cite[Thm~1.4]{ramsey2013} of this result. This building block is a generalised key ring with $q$ keys that are all sufficiently large, and our original approach was to obtain this by working with a subdivision of $\complete{N}$. By taking the subdivision fine enough, we could ensure that each key contained the required pair of paths. However, Lemma~\ref{adddisc.lem} gives us a simple way to enlarge the keys of an existing key ring, thereby eliminating the need to subdivide. This by itself dramatically reduces the number of vertices required. By additionally ``recycling'' vertices left over from earlier stages of the construction, we show that we can in fact do this using no more vertices than were needed to construct the initial key ring with $q$ keys, reducing the number of vertices still further.

\subsection{Some notation and terminology}

The combinatorial structure of a link with many components is usefully described by its \emph{linking pattern:}

\begin{definition}[Flapan~\etal~{\cite[Def'ns 1 and 2]{flapan-mellor-naimi2008}}]
Given a link \LL, the \emph{linking pattern} of \LL\ is the graph with vertices the components of \LL, and an edge between two components $K$ and $L$ if and only if $\link{K}{L}\neq0$. 
The \emph{mod 2 linking pattern} of \LL\ is the graph with vertices the components of \LL, and an edge between two components $K$ and $L$ if and only if $\linktwo{K}{L}\neq0$. 
\end{definition}

An $(r+1)$--component link $R\cup L_1\cup\cdots\cup L_r$ is a \emph{generalised key ring} with ring $R$ and keys $L_1,\ldots,L_r$ if its linking pattern contains the star on $r+1$ vertices as a subgraph, with $R$ as the central vertex. Thus, the components $L_i$ all link $R$, just like the keys on a key ring. The link is referred to as a ``generalised'' key ring to reflect the fact that the keys may link each other, which is not typically the case with the kinds of key rings we carry on our persons.

The linking numbers between components of two disjoint many-component links are conveniently collected into a \emph{linking matrix} as follows:
\begin{definition}
Given disjoint ordered oriented links $\JJ=J_1\cup\cdots\cup J_s$, $\LL=L_1\cup\cdots\cup L_t$, we define their \emph{linking matrix} $\link{\JJ}{\LL}$ to be the $s\times t$ matrix with $(i,j)$--entry $\link{J_i}{L_j}$.
\end{definition}
We will say that a matrix $A$ is \emph{positive} if all entries of $A$ are positive, and \emph{nonvanishing} if every entry of $A$ is nonzero.

\section{Constructing links with large components}
\label{large.sec}

A common strategy in proving Ramsey-type results for intrinsic linking is to start with a link with many components and relatively simple linking behaviour, and combine some of the components to form a link with fewer components but more complicated linking behaviour. Our arguments to prove Theorem~\ref{modq.th} will require that the building block linking components are ``large'' in a suitable sense. Thus, in this section we re-establish a number of known results on intrinsic linking to prove the existence of links with large components.

In the classical one-dimensional case (graphs embedded in \rthree) we will simply require our components to have sufficiently many vertices (equivalently, sufficiently many edges). In principle, no additional work is required in this case, because we could simply take a sufficiently large complete graph and subdivide each edge into a suitably long path, as is done in Flapan~\cite{flapan2002}. The combinatorics of triangulated $n$--spheres are more complicated for $n\geq 2$, however, and it will not be sufficient to simply work with spheres with many vertices or $n$--simplices. Instead, we will additionally require our components to be large in the following sense, where $D$ is chosen in advance:
\begin{definition}
\label{large.defn}
Let $D$ be an $n$--dimensional triangulated disc. A triangulated $n$--sphere is \emph{large with respect to $D$} or $D$--\emph{large} if it contains two disjoint oppositely oriented copies of $D$.
\end{definition}
When it comes time to prove Theorem~\ref{modq.th} we will choose $D$ so that it has a triangulation of the following form:
\begin{definition}
\label{path.defn}
Let $D$ be an $n$--dimensional triangulated disc with $\ell$ $n$--simplices. Then $D$ is a \emph{\path\ of length $\ell$} if its $n$--simplices may be labelled $\Delta_1,\ldots,\Delta_\ell$ such that
\[
D_{ab} = \bigcup_{i=a}^b \Delta_i
\]
is a disc for any $1\leq a\leq b\leq \ell$.  
\end{definition}
For $n=1$ this definition co-incides with the usual meaning of a path in a graph. To construct a path for $n\geq2$  we may start with $\ell$ $n$--simplices $\Delta_1,\ldots,\Delta_\ell$, and choose distinct $(n-1)$--simplices $\gamma_i,\delta_i$ belonging to $\Delta_i$. Choose simplicial isomorphisms $\phi_i:\delta_i\to\gamma_{i+1}$ for $1\leq i\leq\ell-1$, and glue the $\Delta_i$ according to the $\phi_i$. The result is a disc $D^n$, and the triangulation $D^n=\Delta_1\cup\cdots\cup\Delta_\ell$ satisfies Definition~\ref{path.defn} by construction. In Lemma~2.6 of~\cite{ramsey2013} it is shown that a disc constructed in this way has $\ell+n$ vertices, and the number of $(n-1)$--simplices in $\partial D^n$ is $\ell(n-1)+2$. We note that for $n\geq2$ a path does not necessarily have this form: for instance, for $n=2$ the triangulation of a regular $n$--gon by radii may be given the structure of a path. 

We begin by establishing the existence of $D$--large $n$--spheres with arbitrarily many additional $n$--simplices. For convenience, we let \vsphere{D}{m}\ be the minimal number of vertices of a triangulated sphere satisfying the conditions of the following lemma.

\begin{lemma}
\label{triangulatedsphere.lem}
Let $D$ be a triangulated disc, and let $m$ be a positive integer. There is a triangulation of $S^n$ that contains two disjoint oppositely oriented copies of $D$, together with at least $m$ additional $n$--simplices.
\end{lemma}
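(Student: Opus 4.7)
The plan is to realise $S^n$ as $D_1\cup A\cup D_2$, where $D_1$ and $D_2$ are two disjoint copies of $D$ and $A\cong\partial D\times[0,1]$ is a collar annulus whose two boundary spheres coincide with $\partial D_1$ and $\partial D_2$. The two copies of $D$ then appear as disjoint subcomplexes of $S^n$, and the additional $n$-simplices will come from a sufficiently fine triangulation of the interior of $A$.

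For the construction I would triangulate $A$ as a product: decompose each prism $\sigma\times[0,1]$, for $\sigma$ an $(n-1)$-simplex of $\partial D$, into $n$ top-dimensional simplices via the standard shuffle subdivision of a product of simplices. This produces a simplicial triangulation of $A$ whose restrictions to $\partial D\times\{0\}$ and $\partial D\times\{1\}$ agree with the given triangulations of $\partial D_1$ and $\partial D_2$, so that $D_1\cup A\cup D_2$ is a legitimate simplicial triangulation of $S^n$ with $D_1,D_2$ as subcomplexes. Fixing an orientation on $S^n$ induces orientations on the two copies of $D$ that are opposite in the sense of Definition~\ref{large.defn}, since they bound $A$ from opposite sides.

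To secure at least $m$ additional $n$-simplices I would then perform stellar subdivisions on $n$-simplices lying in the interior of $A$, each of which replaces one $n$-simplex by the $n+1$ simplices of the cone over its boundary, for a net gain of $n$ simplices. Since each such subdivision alters only a single interior simplex of $A$, the subcomplexes $D_1$ and $D_2$ are left untouched, and after at most $\lceil m/n\rceil$ subdivisions the resulting triangulation of $S^n$ contains $D_1\sqcup D_2$ together with at least $m$ additional $n$-simplices.

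The only delicate point is compatibility of the triangulations on $\partial D_1$, $A$, and $\partial D_2$, which is precisely what the standard product triangulation is designed to handle; everything else is routine PL topology, resting on the elementary fact that $S^n$ decomposes as the union of two closed balls and a collar annulus joining their boundaries.
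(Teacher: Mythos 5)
Your construction is essentially the same as the paper's: your $D_1\cup A\cup D_2$ is exactly $\partial(D\times I)$, and you use the same product (shuffle) triangulation of the collar, with the same observation that the two ends of the product receive opposite boundary orientations. The only genuine difference is how the extra $n$--simplices are obtained. The paper forms a connected sum of $\partial(D\times I)$ with an auxiliary triangulated sphere $S'$ (for instance the boundary of a sufficiently long $(n+1)$--path) having enough top-dimensional simplices, gluing along an $n$--simplex of $\partial D\times I$ so that the two copies of $D$ are left intact. You instead perform repeated stellar subdivisions of top-dimensional simplices of $A$; since starring a top-dimensional simplex in an $n$--manifold is supported entirely within that one simplex, $D_1$ and $D_2$ are unaffected, and each step gains $n$ top-dimensional simplices. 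Both are elementary and correct; your version is slightly more self-contained because it avoids constructing an auxiliary sphere. One small remark: the restriction to simplices in the ``interior'' of $A$ is unnecessary---any $n$--simplex of $A$ is automatically disjoint from the $n$--simplices of $D_1$ and $D_2$, and stellar subdivision of a top-dimensional simplex alters nothing outside that simplex.
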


\begin{proof}
Consider $D\times I$. If $V=\{v_0,\ldots,v_N\}$ is the vertex set of $D$, then $D\times I$ has a triangulation with vertex set $V\times\{0,1\}$, and simplices of the form 
\[
\delta_j=[(v_{i_0},0),\ldots,(v_{i_j},0),(v_{i_j},1),\ldots,(v_{i_k},1)]
\] 
for $0\leq j\leq k$ and each $k$-simplex $\delta=[v_{i_0},\ldots,v_{i_k}]$ of $D$ with $i_0<i_1<\cdots<i_k$. 
As a first pass we let $S=\partial(D\times I)$ with the induced triangulation.

The $n$--sphere $S$ contains two disjoint copies of $D$, namely $D\times\{0\}$ and $D\times\{1\}$, and they are oppositely oriented because they are exchanged by reflection in the equator $\partial D\times \{\frac12\}$. Suppose that $\partial D$ contains a total of $t$ simplices of dimension $n-1$. 
Each contributes a total of $n$ simplices of dimension $n$ to $\partial D\times I$, so $S$ has a total of $nt$ additional $n$--simplices. If $nt\geq m$ does not hold then let $S'$ be a triangulated $n$--sphere with at least $m-nt+1$ simplices of dimension $n$ (such a triangulated sphere certainly exists, for example by taking the boundary of a sufficiently long $(n+1)$--path, as constructed above). Choose $n$--simplices $\delta$ and $\delta'$ belonging to $\partial D\times I$ and $S'$, respectively, and form the connected sum of $S$ and $S'$ by gluing the discs $S-\delta$ and $S'-\delta'$ along their boundaries. The resulting sphere satisfies the conditions given in the conclusion of the lemma.
\end{proof}

We now use Lemma~\ref{triangulatedsphere.lem} to prove the existence of generalised key rings with large rings. To do this we require the following slight strengthening of Lemma~3.2 of~\cite{ramsey2013}, which is in turn an extension of Lemma~1 of \flapanetal\ to all dimensions. 

\begin{lemma}
\label{makekeyring.lem}
Let $D$ be a triangulated disc. Suppose that \complete{N}\ is embedded in \ambient\ such that it contains
a link 
\[
L\cup J_1\cup\cdots\cup J_{m^2}\cup X_1\cup\cdots\cup X_{m^2},
\]
where $\linktwo{J_i}{X_i}=1$
for all $i$, and $L$ contains two disjoint oppositely oriented copies of $D$ and at least $m^2$ additional $n$-simplices. 
Then there is an $n$-sphere $Z$ in \complete{N}\ 
with all its vertices on $L\cup J_1\cup\cdots\cup J_{m^2}$, and an
index set $I$ with $|I|\geq\frac{m}2$, such that $\linktwo{Z}{X_j}=1$ for all $j\in I$ and $Z$ contains two disjoint oppositely oriented copies of $D$.
\end{lemma}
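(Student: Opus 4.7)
The plan is to follow the proof of Lemma~3.2 of \cite{ramsey2013}, which already handles all dimensions, and strengthen it only where needed to ensure that the final sphere $Z$ retains the two copies of $D$ inside $L$. In the existing argument $Z$ is built as a partial connect sum of $L$ with certain $J_i$, using simplicial $n$-dimensional tubes $T_i$ that join an $n$-simplex of $L$ to an $n$-simplex of $J_i$. The only new requirement for the present statement is that the tube endpoints on the $L$ side avoid the two copies of $D$, and the hypothesis of at least $m^2$ additional $n$-simplices in $L$ provides exactly the freedom needed to do this.

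First I would choose distinct $n$-simplices $\sigma_1,\ldots,\sigma_{m^2}$ of $L$ disjoint from both copies of $D$, and for each $i$ pick an $n$-simplex $\tau_i$ of $J_i$. As in Lemma~3.2 of \cite{ramsey2013}, I would then construct pairwise disjoint simplicial $n$-dimensional tubes $T_i$ in $\complete{N}$ from $\partial \sigma_i$ to $\partial \tau_i$ using only vertices of $L$ and $J_i$; by general position in $\ambient$ these tubes can simultaneously be made disjoint from every $X_j$, since an $n$-dimensional tube and the $n$-sphere $X_j$ generically miss each other in the $(2n+1)$-dimensional ambient space. For each subset $S \subseteq \{1,\ldots,m^2\}$, let $Z_S$ denote the $n$-sphere obtained from $L$ and $\{J_i : i \in S\}$ by removing the interiors of $\sigma_i$ and $\tau_i$ for $i \in S$ and gluing in the tubes $T_i$. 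Each $Z_S$ has all its vertices on $L \cup J_1 \cup \cdots \cup J_{m^2}$, and because the $\sigma_i$ avoid both copies of $D$ every $Z_S$ is automatically $D$-large.

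Next I would compute the mod-two linking numbers. Since each tube $T_i$ is disjoint from every $X_j$, the class of $Z_S$ in $H_n(\ambient - X_j;\Z{2})$ equals $[L] + \sum_{i \in S}[J_i]$, whence
\[
\linktwo{Z_S}{X_j} = \linktwo{L}{X_j} + \sum_{i \in S}\linktwo{J_i}{X_j} \pmod 2.
\]
Choosing $S$ by including each index independently with probability $1/2$, for each fixed $j$ the quantity $\linktwo{Z_S}{X_j}$ is an affine function over $\Z{2}$ of the indicator vector of $S$, whose coefficient of the $j$-th entry equals $\linktwo{J_j}{X_j}=1$; hence $\linktwo{Z_S}{X_j}$ is uniform on $\{0,1\}$. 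By linearity of expectation the expected number of $j$ with $\linktwo{Z_S}{X_j}=1$ is $m^2/2$, so some specific $S$ realises at least $m^2/2 \geq m/2$ such indices. Taking $Z = Z_S$ and $I = \{j : \linktwo{Z}{X_j}=1\}$ completes the argument.

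I expect the main---though fairly mild---technical point to be the simultaneous control of the tubes: they must be pairwise disjoint, simplicial in $\complete{N}$, disjoint from the two copies of $D$, and disjoint from every $X_j$. The first three are arranged by combining the construction of \cite[Lem.~3.2]{ramsey2013} with the enlarged pool of attachment sites provided by the hypothesis on $L$, and the last is a standard general-position argument in $\ambient$. Once the tubes are in place, the averaging step is just the same mod-two counting used by \flapanetal\ and Fleming~\cite{fleming2007}.
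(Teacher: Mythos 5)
Your overall plan is the right one and matches the paper's in spirit: re-run the argument of~\cite[Lemma~3.2]{ramsey2013} while attaching the connecting pieces on the $L$ side at $n$--simplices that avoid the two copies of $D$. The hypothesis of $m^2$ additional $n$--simplices gives exactly that freedom, and the final sphere then inherits both copies of $D$. That much is exactly what the paper does.

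However, the linking computation has a genuine gap. You form $Z_S$ as a connect sum of $L$ with $\{J_i : i \in S\}$ using tubes $T_i$ only for $i\in S$, and assert that, because the tubes are disjoint from each $X_j$, the class of $Z_S$ in $H_n(\ambient - X_j;\Z{2})$ is $[L] + \sum_{i\in S}[J_i]$. That does not follow. Writing $Q_i$ for the closed-up sphere $T_i\cup\sigma_i\cup\tau_i$ (the sphere produced by~\cite[Cor.~2.2]{ramsey2013}), the mod-2 $n$-cycle is
\[
Z_S \;=\; L \;+\; \sum_{i\in S}\bigl(J_i + Q_i\bigr),
\]
so the formula you need reads $\linktwo{Z_S}{X_j}=\linktwo{L}{X_j}+\sum_{i\in S}\bigl(\linktwo{J_i}{X_j}+\linktwo{Q_i}{X_j}\bigr)$. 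Disjointness of $T_i$ from $X_j$ is automatic (the tubes are subcomplexes of the embedded $\complete{N}$ using vertices of $L\cup J_i$ only, so they cannot meet $X_j$) — there is no general position to invoke, since the embedding is fixed and cannot be perturbed — but disjointness says nothing about $\linktwo{Q_i}{X_j}$, which can well be $1$. Then the coefficient on the indicator of $j$ in your affine-over-$\Z{2}$ expression is $\linktwo{J_j}{X_j}+\linktwo{Q_j}{X_j}$, not $1$, and the averaging step collapses if this vanishes for many $j$. The paper's proof avoids this precisely by first assembling a single sphere $\sph = L + \sum_i Q_i$ that incorporates \emph{all} the connecting spheres, and only then choosing a subset $T$ of the $J_i$ to add; the $Q_i$ contributions are then fixed in $\linktwo{\sph}{X_j}$ and do not depend on $T$, so the coefficient on the indicator of $j$ is $\linktwo{J_j}{X_j}=1$ as required. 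Your argument can be repaired by making that same two-stage construction — include all the tubes before varying the subset — but as written the probabilistic step rests on a false linking formula.
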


In Lemma~3.2 of~\cite{ramsey2013} we require only that $L$ has at least $m^2$ $n$--simplices. Thus, the difference between the two results is the stronger condition that $L$ contains the two copies of $D$ and a further $m^2$ $n$--simplices, and the additional conclusion that $Z$ contains two disjoint oppositely oriented copies of $D$. To prove the stronger form it is only necessary to observe that in proving the original result we can ensure that the copies of $D$ in $L$ end up in $Z$.

\begin{proof}
The first step in the proof of~\cite[Lemma 3.2]{ramsey2013} is to construct an $n$-sphere $\sph$ with all its vertices on $L\cup J_1\cup\cdots\cup J_{m^2}$, and meeting each sphere $J_i$ in an $n$--simplex $\delta_i$. This is done by choosing a distinct $n$--simplex $\delta_i'$ belonging to $L$ for each $i=1,\ldots,m^2$, and applying~\cite[Corollary~2.2]{ramsey2013} to obtain a sphere $Q_i\subseteq\complete{N}$ with all its vertices on $\delta_i\cup\delta_i'$, and meeting $J_i$ in $\delta_i$ and $L$ in $\delta_i'$. The sphere \sph\ is then constructed from $L$ and the $Q_i$ by omitting the interiors of the discs $\delta_i'$. Thus, we can ensure that \sph\ contains two disjoint oppositely oriented copies of $D$ by choosing the  $\delta_i'$ from among the $m^2$ additional $n$--simplices of $L$, leaving the copies of $D$ intact.

At the final step in the proof of~\cite[Lemma 3.2]{ramsey2013}, the required sphere $Z$ is constructed from $S$ and a (possibly empty) subset of the $J_i$, by omitting the interiors of the corresponding $n$--simplices $\delta_i$. Therefore, since \sph\ contains the required copies of $D$, we are guaranteed that $Z$ does too. 
\end{proof}

\begin{corollary}
\label{keyring.cor}
Let $D$ be a triangulated disc, and $r$ a positive integer. For $N$ sufficiently large, every embedding of \complete{N}\ in \ambient\ contains an $(r+1)$--component link $R\cup L_1\cup\cdots\cup L_r$ such that $\linktwo{R}{L_i}=1$ for all $i$, and $R$ contains two disjoint oppositely oriented copies of $D$. It suffices to take
\[
N\geq\keydisc{D}{r}=4r^2(2n+4)+\vsphere{D}{4r^2}.
\]
\end{corollary}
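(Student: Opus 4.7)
The plan is to combine Lemma~\ref{triangulatedsphere.lem} (to supply a $D$-large sphere $L$ with many spare $n$-simplices), a standard Ramsey-type result giving many disjoint pairs of mod-2 linked $n$-spheres, and Lemma~\ref{makekeyring.lem} (to assemble these into the required generalised key ring). The vertex count $4r^2(2n+4)+\vsphere{D}{4r^2}$ reflects this partition: the first summand supplies the pairs, the second supplies $L$.

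First I would partition the vertex set of \complete{N}\ into disjoint sets $V_1,V_2$ with $|V_1|=\vsphere{D}{4r^2}$ and $|V_2|\geq 4r^2(2n+4)$. By the definition of \vsphere{D}{4r^2} (via Lemma~\ref{triangulatedsphere.lem}), the full $n$-skeleton on $V_1$ contains, as an abstract subcomplex, an $n$-sphere built from two disjoint oppositely oriented copies of $D$ together with at least $4r^2$ additional $n$-simplices; since \complete{N}\ is embedded, this subcomplex is realised as an embedded $n$-sphere $L$ in \ambient.

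Second, the induced subcomplex on $V_2$ is an embedded copy of $\complete[n]{|V_2|}$. I would invoke the iterated form of Taniyama's theorem established in~\cite{ramsey2013}: every embedding of $\complete[n]{m(2n+4)}$ in \ambient\ contains $m$ pairwise disjoint pairs of $n$-spheres $(J_i,X_i)$ with $\linktwo{J_i}{X_i}=1$ (proved by induction, using intrinsic linking of $\complete[n]{2n+4}$ and removing the $2n+4$ vertices of the pair found at each stage). Applying this with $m=4r^2$ gives $4r^2$ such pairs in $V_2$, all vertex-disjoint from $L$ and hence disjoint from $L$ as subcomplexes of \complete{N}.

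Finally, setting $m=2r$ so that $m^2=4r^2$, the link $L\cup J_1\cup\cdots\cup J_{m^2}\cup X_1\cup\cdots\cup X_{m^2}$ meets the hypotheses of Lemma~\ref{makekeyring.lem}, which then produces an $n$-sphere $Z$ and an index set $I$ with $|I|\geq m/2=r$ such that $Z$ contains two disjoint oppositely oriented copies of $D$ and $\linktwo{Z}{X_j}=1$ for each $j\in I$. Taking $R=Z$ and any $r$ of the spheres $X_j$ with $j\in I$ as $L_1,\ldots,L_r$ yields the desired $(r+1)$-component generalised key ring. The only non-routine ingredient is the many-pairs statement invoked in the second step; everything else is a direct appeal to lemmas already established, and the count splits cleanly because the two halves of the construction are carried out on disjoint vertex sets.
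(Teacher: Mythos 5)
Your proof is correct and matches the paper's argument in every essential respect: isolate a $D$-large sphere $L$ with $4r^2$ spare $n$-simplices via Lemma~\ref{triangulatedsphere.lem}, obtain $4r^2$ disjoint mod-2-linked pairs from disjoint copies of \complete{2n+4}\ via Taniyama, and feed the combined link into Lemma~\ref{makekeyring.lem} with $m=2r$. The only cosmetic difference is that the paper partitions $V_2$ directly into $4r^2$ disjoint copies of \complete{2n+4}\ at the outset and applies Taniyama to each, rather than framing the extraction of pairs as an iteration with vertex removal.
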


\begin{proof}
Given an embedding of \complete{\keydisc{D}{r}}\ in \ambient, choose $4r^2$
disjoint copies of \complete{2n+4}\ contained in the embedding,
together with a copy of
\complete{\vsphere{D}{4r^2}}. By
Taniyama~\cite{taniyama2000} the $i$th copy of \complete{2n+4}\
contains a 2-component link $J_i\cup X_i$ such that $\linktwo{J_i}{X_i}=1$,
and the copy of $\complete{\vsphere{D}{4r^2}}$ contains a triangulated sphere $L$ that contains two disjoint oppositely oriented copies of $D$ and at least $4r^2$ additional $n$--simplices. 
The result now follows by applying
Lemma~\ref{makekeyring.lem} with $m=2r$ to the link
\[
L\cup J_1\cup\cdots\cup J_{4r^2}\cup X_1\cup\cdots\cup X_{4r^2}.
\qedhere
\]
\end{proof}

Finally, we extend Proposition~1 of \flapanetal\ to higher dimensions, with the additional conclusion that all components are large with respect to a chosen triangulated disc $D$. This result serves as the base case for the inductive argument proving Theorem~\ref{modq.th} in Section~\ref{main.sec}.

\begin{proposition}
\label{bipartite.prop}
Let $D$ be a triangulated disc, and let $r$ be a positive integer. For $N$ sufficiently large, every embedding of \complete{N}\ in \ambient\ contains a $2r$--component link
\[
J_1\cup\cdots\cup J_r\cup L_1\cup\cdots\cup L_r,
\]
such that $\linktwo{J_i}{L_j}$ is nonzero for all $i$ and $j$, and each component contains two disjoint oppositely oriented copies of $D$. 
\end{proposition}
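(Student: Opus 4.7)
My plan is to build the required link via an iterated application of Lemma~\ref{makekeyring.lem}, using Corollary~\ref{keyring.cor} to ensure that the candidate ``key''-components are already $D$-large.

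Define $m_r=2r$ and recursively $m_{t-1}=2m_t^2$ for $t=r,\ldots,2$. Choose $N$ large enough that \complete{N}\ contains pairwise vertex-disjoint sub-complexes of two types: (i) for each $k=1,\ldots,m_1^2$, a copy of \complete{\keydisc{D}{r}}, which by Corollary~\ref{keyring.cor} yields a generalised key ring $X_k\cup A_k^{(1)}\cup\cdots\cup A_k^{(r)}$ in which $X_k$ is $D$-large and $\linktwo{X_k}{A_k^{(t)}}=1$ for every $t$; and (ii) for each $t=1,\ldots,r$, a copy of \complete{\vsphere{D}{m_t^2}}, which by Lemma~\ref{triangulatedsphere.lem} yields a $D$-large sphere $L^{(t)}$ with at least $m_t^2$ additional $n$-simplices.

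I then construct spheres $Z^{(1)},\ldots,Z^{(r)}$ iteratively, maintaining a descending chain of index sets $I_0\supseteq I_1\supseteq\cdots\supseteq I_r$ in $\{1,\ldots,m_1^2\}$, where $I_t$ consists of all $k$ with $\linktwo{Z^{(s)}}{X_k}=1$ for every $s\leq t$. Starting from $I_0=\{1,\ldots,m_1^2\}$, at step $t$ I pick an $m_t^2$-subset $I'\subseteq I_{t-1}$ (available since $|I_{t-1}|\geq m_t^2$) and apply Lemma~\ref{makekeyring.lem} with $m=m_t$ to the link $L^{(t)}\cup\bigcup_{k\in I'}(A_k^{(t)}\cup X_k)$; this yields a $D$-large sphere $Z^{(t)}$ with vertices on $L^{(t)}\cup\bigcup_{k\in I'}A_k^{(t)}$ and mod-$2$ linking at least $m_t/2$ of these $X_k$'s, and I let $I_t$ be the resulting set of linked indices. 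The recurrence gives $|I_t|\geq m_t/2=m_{t+1}^2$ for $t<r$ and $|I_r|\geq m_r/2=r$. Finally I choose distinct $k_1,\ldots,k_r\in I_r$ and set $J_t=Z^{(t)}$ and $L_j=X_{k_j}$. These $2r$ spheres are pairwise vertex-disjoint by construction (distinct $Z^{(t)}$'s use vertex-disjoint $L^{(t)}$'s and vertex-disjoint spokes, since the keys $A_k^{(t)}$ of a common key ring are themselves vertex-disjoint, and the $X_k$'s lie in their own sub-complexes), each is $D$-large, and $\linktwo{J_t}{L_j}=1$ for all $t,j$ by the nesting of the $I_t$.

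The hard part will be the doubly recursive definition of the parameters $m_t$. Since Lemma~\ref{makekeyring.lem} only guarantees mod-$2$ linking with at least $m/2$ out of $m^2$ candidate targets, keeping enough common targets alive across all $r$ iterations forces $m_{t-1}\geq 2m_t^2$, so $m_1$ is doubly exponential in $r$ and $N$ is correspondingly large; no single step in the construction is difficult, but the recursion's cost is unavoidable without a stronger version of Lemma~\ref{makekeyring.lem}.
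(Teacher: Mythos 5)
Your proof is correct and follows essentially the same approach as the paper's: build $m_1^2$ vertex-disjoint generalised key rings via Corollary~\ref{keyring.cor} so that each ring is $D$-large, use the $t$th keys of a shrinking subset of these key rings together with an auxiliary $D$-large sphere $L^{(t)}$ as input to Lemma~\ref{makekeyring.lem}, and iterate, nesting the index sets. Your parameters even match: unwinding your recursion gives $m_t = (4r)^{2^{r-t}}/2$, so $m_1^2 = (4r)^{2^r}/4$, which is exactly the paper's number $m$ of key rings.
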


The link given by this result has mod two linking pattern containing the complete bipartite graph $K_{r,r}$, because each component $J_i$ has nonzero mod 2 linking number with each component $L_j$. The argument to prove the existence of such a link is exactly that of Flapan\etal's proof of their Proposition~1, and the extension to higher dimensions already follows from our paper~\cite{ramsey2013}: as noted in Section~1.2.2 of~\cite{ramsey2013} their Proposition~1 is a purely combinatorial argument that depends only on their Lemma~1 and the existence of generalised key rings, and these are generalised to higher dimensions in~\cite{ramsey2013}. So the work to be done here is to ensure that each component contains copies of the disc $D$. 

For $n=1$ this already follows from Flapan\etal's Proposition~1, because we may simply subdivide each edge of a sufficiently large complete graph into paths of length $\ell$. A similar approach could be taken in higher dimensions, using the subdivisions of \complete{N}\ constructed in~\cite{ramsey2013}, but this introduces many unnecessary vertices. We give a simpler argument that doesn't make use of subdivision, and requires far fewer vertices.

\begin{proof}
Following \flapanetal\ let $m=\frac{(4r)^{2^r}}{4}$, and let
\[
N = m\keydisc{D}{r}+r\vsphere{D}{m}.
\]
Then $\complete{N}$ contains $m$ copies of $\complete{\keydisc{D}{r}}$ and $r$ copies of $\complete{\vsphere{D}{m}}$, all disjoint from one another. Given an embedding of \complete{N}\ in \ambient, by Corollary~\ref{keyring.cor} the $i$th copy of  $\complete{\keydisc{D}{r}}$ contains a generalised key ring
\[
R_i\cup J_{i1}\cup\cdots\cup J_{ir}
\]
such that the ring $R_i$ is $D$-large; and the $j$th copy of $\complete{\vsphere{D}{m}}$ contains a $D$-large sphere $L_j$ that contains at least $m$ additional $n$-simplices.

Apply Lemma~\ref{makekeyring.lem} to the link
\[
L_1\cup J_{11}\cup\cdots\cup J_{m1}\cup R_1\cup\cdots\cup R_m.
\]
This yields a $D$-large sphere $Z_1$ with all its vertices on $L_1\cup J_{11}\cup\cdots\cup J_{m1}$, and an index set $I_1$ with $|I_1|\geq\frac{\sqrt{m}}{2}=\frac{(4r)^{2^{r-1}}}{4}=m_1$, such that $\linktwo{Z_1}{R_i}=1$ for all $i\in I_1$. Suppose now that for some $1\leq k<r$ we have constructed $D$-large spheres $Z_1,\ldots,Z_k$ and an index set $I_k$ such that
\begin{enumerate}
\item\label{vertices.item}
all vertices of $Z_j$ lie on $L_j\cup J_{1j}\cup\cdots\cup J_{mj}$ for $1\leq j\leq k$;
\item
$|I_k|\geq m_k = \frac{(4r)^{2^{r-k}}}{4}$;
\item\label{linktwo.item}
$\linktwo{Z_j}{R_i}=1$ for all $1\leq j\leq k$ and $i\in I_k$. 
\end{enumerate}
Applying Lemma~\ref{makekeyring.lem} to the link
\[
L_{k+1}\cup\left(\bigcup_{i\in I_k} J_{i(k+1)}\right)\cup\left(\bigcup_{i\in I_k} R_{i}\right)
\]
we obtain a $D$-large sphere $Z_{k+1}$ with all its vertices on 
$L_{k+1}\cup J_{1(k+1)}\cup\cdots\cup J_{m(k+1)}$, and an index set $I_{k+1}\subseteq I_k$ with $|I_{k+1}|\geq\frac{\sqrt{m_k}}{2}= \frac{(4r)^{2^{r-k-1}}}{4}=m_{k+1}$, such that $\linktwo{Z_{k+1}}{R_i}=1$ for all $i\in I_{k+1}$. This gives us $D$-large spheres $Z_1,\ldots,Z_{k+1}$ and an index set $I_{k+1}$ such that conditions~\eqref{vertices.item}--\eqref{linktwo.item} hold with $k$ replaced by $k+1$, so by induction there are $D$-large spheres $Z_1,\ldots,Z_r$ and an index set $I_r$ such that they hold for $k=r$. Since $m_r=\frac{(4r)^{2^{r-r}}}{4}=r$, the first $2r$ components of 
\[
Z_1\cup\cdots\cup Z_r\cup\left(\bigcup_{i\in I_r} R_{i}\right)
\]
are the required link.
\end{proof}

\section{The main technical lemma}
\label{technical.sec}
 
This section is dedicated to proving the following analogue of Lemma~2 of 
\flapanetal, which forms the main technical lemma of this paper:
\begin{proposition}[Main technical lemma]\label{stitchinglinks.prop}
Let $q\in\naturals$. Suppose that \complete{N}\ is
embedded in \ambient\ such that it contains a link with
oriented components $J_1,\ldots,J_A$, $L_1,\ldots,L_B$,
$X_1,\ldots,X_S$ and $Y_1,\ldots,Y_T$ satisfying
\begin{enumerate}\newcounter{tempcounter}
\item
$A\geq  2^{S}q^{S+T}$;
\item
$B\geq  3^S 2^{T}(S+T)q^{S+T}$;
\item
$\link{J_a}{X_s}$ is nonzero for all $a$ and $s$; 
\item
$\link{L_b}{Y_t}$ is nonzero for all $b$ and $t$; and
\item
\label{existenceofdiscs.item}
each component $J_a,L_b$ contains two disjoint oppositely oriented copies of a fixed path \disc\ of length $\lambda\geq(2q)^{S+T}$.
\end{enumerate}
Then \complete{N}\ contains an $n$-sphere $Z$ with all its vertices
on $J_1\cup\cdots\cup J_A\cup L_1\cup\cdots\cup L_B$ such that,
for each $s$ and $t$, $\link{Z}{X_s}$ and
$\link{Z}{Y_t}$ are nonzero multiples of $q$. 
\end{proposition}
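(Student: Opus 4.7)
The plan is to build $Z$ as a connect sum of suitably chosen subcollections of the $J_a$'s and $L_b$'s, with each connect sum performed along a simplex of one of the paths \disc\ carried by those components. The basic arithmetic tool is the pigeonhole observation quoted in the introduction: given $q$ integers, some consecutive subsum is $\equiv 0\bmod q$. The basic topological tool is the path itself --- by Definition~\ref{path.defn} removing an initial or terminal segment of a path leaves a disc, so the surgeries used to form each connect sum produce genuine $n$-spheres, and a sufficiently long starting path leaves a shorter path available for each subsequent operation.

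I would process the targets in two phases, first $X_1,\ldots,X_S$ using the $J$-components and then $Y_1,\ldots,Y_T$ using the $L$-components. To handle $X_s$ from a surviving sub-family of the $J$'s, I would partition that sub-family into consecutive blocks of size $q$ and, within one block, apply pigeonhole to the partial sums $\sum\link{J_a}{X_s}\bmod q$ to extract a consecutive subrun whose total is $\equiv 0\bmod q$ and, since each $\link{J_a}{X_s}$ is nonzero by hypothesis and the subrun is short, nonzero in $\integer$. Connect-summing the chosen $J$'s onto the running sphere along path simplices then arranges $\link{Z}{X_s}$ to be a nonzero multiple of $q$. Before the $Y$-phase I would further refine the surviving $L$'s into a subfamily inside which $\link{L_b}{X_s}\bmod q$ is constant in $b$ for each $s$, so that afterwards one works with $q$-sized ``super-$L$'s'' whose aggregate contribution to each $\link{Z}{X_s}$ is automatically a multiple of $q$, preserving the $X$-conditions already arranged. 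Each $Y_t$-step then proceeds as in the $X$-phase, with super-$L$'s in place of individual $L$'s and with an additional sign-refinement so that the pigeonhole output is nonzero in $\integer$, not merely divisible by $q$. The exponents in the hypotheses track exactly these costs: $q^{S+T}$ comes from the pigeonhole consumption at each of the $S+T$ steps; the factors $2^S$ and $2^T$ pay for per-step sign refinements on the $J$- and $L$-side; the $3^S$ pays for the residue-class refinement of the $L$'s before the $Y$-phase; and $(2q)^{S+T}$ is the per-step halving-and-$q$-consumption of available path length.

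The principal obstacle is realising these arithmetic moves topologically inside the embedded \complete{N}. Each connect sum has to be built using a disc bridge between two components glued along path simplices --- essentially the construction underlying Corollary~\ref{keyring.cor} --- and the resulting surface must still be an $n$-sphere containing two disjoint, oppositely oriented copies of a path of the length demanded by the next inductive step. The two oppositely oriented copies of \disc\ in each component are exactly what makes this possible: one copy is used as connect-sum material, while the opposite copy propagates through the operation to seed the following stage. Simultaneously controlling the combinatorial budget (how many $J$'s and $L$'s still survive), the geometric budget (how much path length still survives), and the arithmetic state (signs and residues mod $q$ of all previously arranged linking numbers) is the central technical hurdle, and is what dictates the precise form of the constants $2^Sq^{S+T}$, $3^S2^T(S+T)q^{S+T}$, and $(2q)^{S+T}$ appearing in the statement.
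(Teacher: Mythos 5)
Your proposal takes a genuinely different route from the paper's and has gaps that would need to be closed.

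The paper does not handle $X_1,\ldots,X_S$ one at a time with separate blocks of $J$'s. Instead, it performs a \emph{single} vector pigeonhole over all $S+T$ targets simultaneously: it picks one consecutive block $J_{\alpha_0+1},\ldots,J_{\alpha_1}$ of $\JJ'$ whose sum $\vect{j}=\sum_{a=\alpha_0+1}^{\alpha_1}\link{J_a}{\XX\cup\YY}$ is $\equiv 0\bmod q$ in every coordinate, using $q^{S+T}$ pigeonholes. Your sequential scheme — fixing $X_1$ with one $J$-block, then $X_2$ with a later block — wrecks the first condition, since the second block's contribution to $\link{Z}{X_1}$ is an arbitrary residue mod $q$. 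There is no device in your sketch to prevent this, and patching it up would essentially push you back to the paper's simultaneous pigeonhole.

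Your budget accounting is also off in a way that reveals a second gap. You attribute the $3^S$ factor to a residue-class refinement making $\link{L_b}{X_s}\bmod q$ constant in $b$ for each $s$; but such a refinement costs a factor of $q^S$, not $3^S$, and the hypotheses do not provide this. In the paper, $3^S$ pays for a \emph{sign} refinement (each column of $\link{\LL''}{\XX}$ is entirely positive, entirely negative, or entirely zero). This is not a cosmetic difference: your ``super-$L$'' device controls residues but not signs, so the $L$-phase contribution to $\link{Z}{X_s}$ can be any multiple of $q$, including the one that sends the total to zero. The paper's column-sign constancy makes the $X_s$-entry monotone across its $S+T+1$ candidate vectors $\vect{j},\vect{j}+\vectg{\ell}_1,\ldots,\vect{j}+\vectg{\ell}_{S+T}$, so each coordinate can vanish for at most one candidate; the factor $(S+T)$ in $B$ exists precisely to produce enough candidates that one survives all $S+T$ potential zeros. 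Your sketch has no mechanism for this ``dodge the zeros'' step.

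Finally, even once the components of the connect sum are chosen, the stitching itself alters linking numbers: the bridging spheres $F_c$ have their own linking numbers with $\XX\cup\YY$. The paper handles this with a separate combinatorial lemma (Lemma~\ref{forbiddenvalues.lem}, a bipartite-graph argument) that picks a subinterval of path simplices whose aggregate linking vector is $\equiv 0\bmod q$ \emph{and} avoids the $S+T$ forbidden values that would make some entry of the running total vanish. Your ``per-step halving-and-$q$'' remark gestures in the right direction but does not supply this lemma, and it is a genuinely nontrivial ingredient: a naive pigeonhole gives divisibility but not nonvanishing. Note also that the paths are not ``consumed'' across steps as your sketch suggests — each $Z_c$ carries its own fresh pair $D_c,D_c'$, with $D_c$ feeding $F_c$ and $D_c'$ feeding $F_{c-1}$, so the full length $\lambda$ is available at every connect-sum step.
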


We note that the hypotheses of our Proposition~\ref{stitchinglinks.prop} are much stronger than the hypotheses of Flapan~\etal's Lemma~2: we require $A$ and $B$ to be much greater, and we have the additional hypothesis~\eqref{existenceofdiscs.item} that the components $J_a,L_b$ are large with respect to a certain path. This is to be expected, since our conclusion is strictly stronger than theirs: any nonzero multiple of $q$ is necessarily at least $q$ in magnitude.

Before proving Proposition~\ref{stitchinglinks.prop} we first establish the following lemma on sums of vectors in $\real^d$, which we will use in the proof.

\begin{lemma}\label{forbiddenvalues.lem}
Let $\vect{f}\in\real^d$ be a vector with all entries nonzero, and for $i=0,\ldots,N$ let $\vect{v}_i\in\real^d$. If $N\geq 2^d$ then there exist $0\leq j<k\leq N$ such that every entry of $\vect{f}+\vect{v}_k-\vect{v}_j$ is nonzero.
\end{lemma}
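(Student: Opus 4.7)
My plan is to prove this by pigeonhole: I will assign each index $i\in\{0,1,\ldots,N\}$ a color $c(i)$ in a set of size $2^d$, and then two indices sharing a color will automatically give the pair $(j,k)$ I want. The bound $N\geq 2^d$ is exactly what pigeonhole needs on $N+1$ objects, which is a strong hint that the argument should be a coloring one. Moreover, the failure condition is \emph{coordinate-local}: $\vect{f}+\vect{v}_k-\vect{v}_j$ having a zero entry means that in some single coordinate $e$ we have $(\vect{v}_k)_e-(\vect{v}_j)_e=-f_e$. So it is natural to build $c$ one bit per coordinate and try to rule out each bad coordinate independently.

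To this end, I will set $c(i)=(c^1(i),\ldots,c^d(i))\in\{0,1\}^d$, where $c^e$ is designed so that $(\vect{v}_k)_e-(\vect{v}_j)_e=-f_e$ forces $c^e(j)\neq c^e(k)$. The candidate I will try first is
\[
c^e(i) = \left\lfloor \frac{(\vect{v}_i)_e}{f_e}\right\rfloor \bmod 2,
\]
which detects exactly the bad shift: decreasing $(\vect{v}_i)_e$ by $f_e$ decreases $(\vect{v}_i)_e/f_e$ by $1$ (this works regardless of the sign of $f_e$, and uses only $f_e\neq 0$), decreases the floor by $1$, and hence flips the parity.

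Once this coloring is in hand the rest is immediate. Since the codomain of $c$ has cardinality $2^d<N+1$, there exist $j<k$ with $c(j)=c(k)$. For this pair, coordinate $e$ satisfies $c^e(j)=c^e(k)$, so by the defining property of $c^e$ we have $(\vect{v}_k)_e-(\vect{v}_j)_e\neq -f_e$, which says precisely that $(\vect{f}+\vect{v}_k-\vect{v}_j)_e\neq 0$. This holds for every $e$, giving the claim.

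The only step requiring any thought is identifying the right coloring; once I have written down the floor-mod-$2$ trick, the verification and the pigeonhole step are both routine. If the floor coloring somehow failed (it should not), my fallback would be to build $c^e$ by orbit-decomposing $\real$ under the shift $x\mapsto x-f_e$ and alternating $0,1$ along each orbit, which is essentially the same idea in a more cumbersome form.
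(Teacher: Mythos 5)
Your proof is correct. Let me verify the key step: if $(\vect{v}_k)_e-(\vect{v}_j)_e=-f_e$, then $(\vect{v}_k)_e/f_e=(\vect{v}_j)_e/f_e-1$, so $\lfloor(\vect{v}_k)_e/f_e\rfloor=\lfloor(\vect{v}_j)_e/f_e\rfloor-1$ and the parities flip, exactly as you claim, with no dependence on the sign of $f_e$. Pigeonhole on $N+1\geq 2^d+1$ indices then gives a monochromatic pair $j<k$, for which every coordinate is good.

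Your route is genuinely different from the paper's, though both ultimately run pigeonhole with $2^d$ classes. The paper argues by induction on $d$: the base case $d=1$ is a direct three-pigeons check, and for the inductive step it builds an auxiliary graph on $\{0,\ldots,N\}$ whose edges join indices whose $(d+1)$-th coordinates differ by the forbidden amount $-f^{(d+1)}$, proves this graph is bipartite via a parity-of-cycle-length argument (using $f^{(d+1)}\neq 0$), passes to the larger colour class of size $\geq 2^d+1$, and recurses on the remaining $d$ coordinates. You collapse all of that by exhibiting, in one shot, an explicit proper $2$-colouring of each of these per-coordinate graphs via $\lfloor\cdot/f_e\rfloor\bmod 2$, and then taking the product colouring. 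In effect you are writing down the bipartition that the paper only proves to exist; this removes the induction and the cycle-parity lemma entirely and is shorter and cleaner. The paper's version is perhaps more robust in that it never needs an explicit formula (your fallback, alternating along orbits of $x\mapsto x-f_e$, is essentially its implicit construction), but for this lemma your direct colouring is the better proof.
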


\begin{proof}
The proof is by induction on $d$. In the base case $d=1$, suppose that $N\geq 2$. If either $f+v_1-v_0$ or $f+v_2-v_1$ is nonzero then we are done, and otherwise
\[
f+v_2-v_0 = (f+v_2-v_1)+(f+v_1-v_0)-f=-f\neq 0.
\]
Thus the lemma holds in the base case $d=1$.

Suppose now that the lemma holds for some $d\geq1$, and let $\vect{v}_0,\vect{v}_1,\ldots,\vect{v}_N$ be $N+1\geq 2^{d+1}+1$ vectors in $\real^{d+1}$. We claim that there is $N'\geq 2^d$ and $N'+1$ indices $0\leq i_0<i_1<\cdots<i_{N'}\leq N$ such that, for any $0\leq j<k\leq N'$, the $(d+1)$th entry of $\vect{f}+\vect{v}_{i_k}-\vect{v}_{i_j}$ is nonzero. The inductive step will then follow by applying the inductive hypothesis to the first $d$ entries of $\vect{f}$ and 
$\vect{v}_{i_0},\ldots,\vect{v}_{i_{N'}}$. 

Write $x^{(i)}$ for the $i$th entry of $\vect{x}\in\real^m$. To prove the claim we consider the graph with vertex set $\{0,1,\ldots,N\}$, and an edge between $j$ and $k$ if $j<k$ and the difference $v_k^{(d+1)}-v_j^{(d+1)}$ is equal to the forbidden value $-f^{(d+1)}$. Now observe that for any path $(i_0,i_1,\ldots,i_m)$ in this graph we have
\[
v_{i_m}^{(d+1)}-v_{i_0}^{(d+1)}
=\sum_{j=1}^{m}[v_{i_j}^{(d+1)}-v_{i_{j-1}}^{(d+1)}]
= -f^{(d+1)}\sum_{j=1}^{m-1} \sign(i_{j}-i_{j-1}).
\]
In particular, if the path is a cycle then $i_m=i_0$, and it follows that
$f^{d+1}\sum_{j=1}^{m-1} \sign(i_{j+1}-i_j)=0$. Since $f^{(d+1)}$ is nonzero by hypothesis the sum must be zero, and since each term is $\pm1$, for this to occur it must involve an even number of terms. Thus any cycle must be of even length, and it follows that our graph is bipartite. 

Colour the vertices black and white in such a way that there is no edge between vertices of the same colour, and let $0\leq i_0<i_1<\cdots<i_{N'}\leq N$ be the vertices belonging to the larger colour class. Then $N'+1\geq \lceil (N+1)/2\rceil \geq \lceil(2^{d+1}+1)/2\rceil = 2^d+1$, and for any $0\leq j<k\leq N'$ we have $f^{(d+1)}+v_{i_k}^{(d+1)}-v_{i_j}^{(d+1)}\neq0$, as required. Lemma~\ref{forbiddenvalues.lem} now follows by our discussion above.
\end{proof}

\begin{proof}[Proof of Proposition~\ref{stitchinglinks.prop}]
Let
\begin{align*}
\JJ&=J_1\cup\cdots\cup J_A, & \XX&=X_1\cup\cdots\cup X_S, \\
\LL&=L_1\cup\cdots\cup L_B, & \YY&=Y_1\cup\cdots\cup Y_T. 
\end{align*}
Following \flapanetal, we begin by replacing the links
$\mathcal{J}$ and $\mathcal{L}$ with sublinks $\mathcal{J}'$, $\mathcal{L}''$ for which we have some control over the signs of the entries of the linking matrices $\link{\JJ'}{\XX}$, $\link{\LL''}{\YY}$ and $\link{\LL''}{\XX}$. To do this, we first consider the patterns of signs of the entries of the vectors $\link{J_a}{\XX}$. Since these vectors have $S$ entries, and all are nonzero, there are $2^S$ possibilities for the patterns of signs (positive and negative) in each one. It follows that we can choose at least $A/2^S\geq q^{S+T}$ of them that all have the same pattern of signs. Moreover, after reversing the orientation of some components of \XX\ if necessary, we may assume that these signs are all positive. Thus, setting $\JJ'=J_1\cup\cdots\cup J_{q^{S+T}}$, we may assume without loss of generality that the linking matrix \link{\JJ'}{\XX}\ is positive.

Applying the same argument to the vectors \link{L_b}{\YY}, we obtain a sublink $\LL'$ of $\LL$ with at least $3^S(S+T)q^{S+T}$ components such that the linking matrix \link{\LL'}{\YY}\ is positive. We now consider the patterns of signs (positive, negative or zero) of the vectors \link{L_b}{\XX}\ for $L_b$ a component of $\LL'$. There are now $3^S$ possibilities for these patterns, so we may choose at least $(S+T)q^{S+T}$ components that have the same pattern. Setting $\LL''=L_1\cup\cdots\cup L_{(S+T)q^{S+T}}$ we may therefore assume without loss of generality that the linking matrix \link{\LL''}{\YY}\ is positive, and that each column of \link{\LL''}{\XX}\ is either positive, negative, or zero. From now on we restrict our attention to the sublinks $\JJ'$ and $\LL''$ of \JJ\ and \LL. 

Our next goal is to construct a sublink $\ZZ=Z_1\cup\cdots\cup Z_C$ of
$\JJ'\cup\LL''$ such that every entry of
\[
\vect{z}=\sum_{c=1}^C \link{Z_c}{\XX\cup\YY}
\]
is a nonzero multiple of $q$. At the final step we will obtain the required $n$-sphere $Z$ as a connect sum of the components of \ZZ. To this end we begin by considering the sums
\[
\vect{j}_\alpha = \sum_{a=1}^\alpha \link{J_a}{\XX\cup\YY}
\]
modulo $q$ for $1\leq \alpha\leq q^{S+T}$. Each vector $\vect{j}_\alpha$ has $S+T$ entries, so there are $q^{S+T}$ possibilities when considered mod $q$. Since we have $q^{S+T}$ vectors in total, by the pigeonhole principle we can either find one that is zero modulo $q$, or two that are equal modulo $q$. In either case, there are integers $0\leq \alpha_0< \alpha_1\leq q^{S+T}$ such that the vector
\[
\vect{j} = \sum_{a=\alpha_0+1}^{\alpha_1} \link{J_a}{\XX\cup\YY}
\] 
is zero modulo $q$. Moreover, the first $S$ entries of $\vect{j}$ are given by $\sum_{a=\alpha_0+1}^{\alpha_1} \link{J_a}{\XX}$, and are therefore nonzero, because the vector  \link{J_a}{\XX}\ is positive for each $a$. We will use $J_{\alpha_0+1}\cup\cdots\cup J_{\alpha_1}$ as the first $\alpha_1-\alpha_0$ components of \ZZ.

We now consider the sums
\[
\sum_{b=1}^\beta \link{L_b}{\XX\cup\YY}
\]
modulo $q$ for $1\leq\beta\leq (S+T)q^{S+T}$. Since there are again $q^{S+T}$ possibilities mod $q$, and we have $(S+T)q^{S+T}$ sums in total, we can either find $S+T$ of them that are zero mod $q$, or $S+T+1$ of them that are identical mod $q$. In either case, there are integers $0\leq\beta_0<\beta_1<\cdots<\beta_{S+T}\leq(S+T)q^{S+T}$ such that the vectors
\[
\vectg{\ell}_i 
   = \sum_{b=\beta_0+1}^{\beta_i} \link{L_b}{\XX\cup\YY}
\]
are zero modulo $q$. Any additional components of \ZZ\ will be chosen from among $L_{\beta_0+1}\cup L_{\beta_0+2}\cup\cdots\cup L_{\beta_{S+T}}$. 

To choose the remaining components of \ZZ\ we consider the sequence of $S+T+1$ vectors $\vect{j},\vect{j}+\vectg{\ell}_1,\ldots,\vect{j}+\vectg{\ell}_{S+T}$. From above these vectors are all zero when considered modulo $q$, and we claim that it is possible to choose at least one of them that is nonvanishing when considered as an integer vector. To see this, consider first the $(S+t)$--entries for some $1\leq t\leq T$, which are given by
\begin{align*}
j^{(S+t)}&=\sum_{a=\alpha_0+1}^{\alpha_1}\link{J_a}{Y_t}, \\
(j+\ell_i)^{(S+t)}&=\sum_{a=\alpha_0+1}^{\alpha_1}\link{J_a}{Y_t}+\sum_{b=\beta_0+1}^{\beta_i} \link{L_b}{Y_t}.
\end{align*}
Since the linking matrix \link{\LL''}{\YY}\ is positive these form a strictly increasing sequence, and consequently the $(S+t)$--entry vanishes for at most one of our $S+T+1$ vectors.

Next, consider the $s$--entries for some $1\leq s\leq S$, which are given by
\begin{align*}
j^{(s)}&=\sum_{a=\alpha_0+1}^{\alpha_1}\link{J_a}{X_s}, \\
(j+\ell_i)^{(s)}&=\sum_{a=\alpha_0+1}^{\alpha_1}\link{J_a}{X_s}+\sum_{b=\beta_0+1}^{\beta_i} \link{L_b}{X_s}.
\end{align*}
Recall that the first sum is positive, and that each column of the linking matrix \link{\LL''}\XX\ is either positive, negative, or zero. It follows that the above sequence of integers is either constant (in which case it is positive), or it is strictly increasing or strictly decreasing. In any case we again conclude that the $s$--entry vanishes for at most one of our $S+T+1$ vectors. Thus there are at most $S+T$ vectors for which one of the entries vanishes, and so there is at least one for which no entry vanishes, proving the claim. We may then set
\[
\ZZ = Z_1\cup\cdots\cup Z_C 
  = \begin{cases}
    J_{\alpha_0+1}\cup\cdots\cup J_{\alpha_1} & \text{if $\vect{j}$ is 
                       nonvanishing, or} \\
    J_{\alpha_0+1}\cup\cdots\cup J_{\alpha_1}
           \cup L_{\beta_0+1}\cup\cdots\cup L_{\beta_i} & \text{if $\vect{j}+\vectg{\ell}_i$ is nonvanishing}. 
   \end{cases}
\]
With this choice of \ZZ, every entry of
\[
\vect{z}_0=\sum_{c=1}^C \link{Z_c}{\XX\cup\YY}
\]
is a nonzero multiple of $q$, as required. 

Our final task is to obtain the required $n$-sphere as a suitable connect sum of the components of \ZZ.  To do this we will inductively construct oriented spheres $F_1,\ldots,F_{C-1}$ such that, for each $1\leq\gamma\leq C-1$,
\begin{enumerate}
\renewcommand{\labelenumi}{(\alph{enumi})}
\renewcommand{\theenumi}{\alph{enumi}}
\item\label{F-vertices.item}
the vertices of $F_\gamma$ lie on $Z_\gamma\cup Z_{\gamma+1}$ (and so $F_\gamma$ is disjoint from \XX, \YY, and the rest of \ZZ);
\item\label{F-discs.item}
$F_{\gamma-1}\cap Z_\gamma$ and $F_\gamma\cap Z_\gamma$ are disjoint discs, each of which is oppositely oriented by $Z_\gamma$ and $F_{\gamma-1}$ or $F_\gamma$;
\item\label{F-sum.item}
every entry of the vector
\[
\vect{z}_\gamma=\vect{z}_0 + \sum_{i=1}^\gamma \link{F_i}{\XX\cup\YY}
\]
is a nonzero multiple of $q$.
\end{enumerate}
We will then obtain the required sphere $Z$ from the union of \ZZ\ and the $F_c$ by omitting the interiors of the discs $F_c\cap Z_c$ and $F_c\cap Z_{c+1}$. Conditions~\eqref{F-vertices.item} and~\eqref{F-discs.item} imply that $F_c$ and $F_{c'}$ are disjoint for all $c$ and $c'$, and it follows that $Z$ is a connect sum of spheres, and hence itself a sphere. Moreover, as a chain we have $Z=\sum_{c=1}^C Z_c+\sum_{c=1}^{C-1}F_c$, so 
\[
\link{Z}{\XX\cup\YY} = \vect{z}_0 + \sum_{c=1}^{C-1} \link{F_c}{\XX\cup\YY},
\] 
and by condition~\eqref{F-sum.item} every entry of this vector is a nonvanishing multiple of $q$. 

The underlying technique for constructing the spheres $F_c$ comes from the proof of Theorem~1.4 of Tuffley~\cite{ramsey2013}, but additional work is required to ensure that condition~\eqref{F-sum.item} is satisfied. 
By hypothesis~\eqref{existenceofdiscs.item} each sphere $Z_c$ contains two disjoint copies of the \path\ \disc, one of each orientation. 
We begin by labelling these $D_c$ and $D_c'$ in such a way that there is an  orientation reversing simplicial isomorphism $\phi_c\co D_c\to D_{c+1}'$. This may be done inductively: first label the copies of \disc\ contained in $Z_1$ arbitrarily, and then once $D_c$ and $D_c'$ have been chosen, choose $D_{c+1}$ and $D_{c+1}'$ so that $D_{c+1}'$ is oppositely oriented to $D_c$. We will choose the spheres $F_c$ so that the following strengthened form of condition~\eqref{F-vertices.item} holds for $1\leq\gamma\leq C-1$:
\begin{itemize}
\item[(\ref{F-vertices.item}$'$)]
the vertices of $F_\gamma$ lie on $D_\gamma\cup D_{\gamma+1}'$.
\end{itemize}
This condition serves to ensure that $F_{\gamma-1}\cap Z_\gamma$ and $F_\gamma\cap Z_\gamma$ are disjoint, as required by condition~\eqref{F-discs.item}.

Suppose that for some $0\leq c< C-1$ the spheres $F_1,\ldots,F_{c}$ have been constructed so that conditions~(\ref{F-vertices.item}$'$), \eqref{F-discs.item} and~\eqref{F-sum.item} hold for $0\leq\gamma\leq c$. When $c=0$ conditions~(\ref{F-vertices.item}$'$) and~\eqref{F-discs.item} are empty, and condition~\eqref{F-sum.item} is that every entry of $\vect{z}_0$ is a nonzero multiple of $q$, so we may take $c=0$ as our base case. Let $\Delta_1,\ldots,\Delta_\lambda$ be a labelling of the $n$--simplices of the path $D_{c+1}$ as in Definition~\ref{path.defn}, and for $1\leq\ell\leq\lambda$ let $P_\ell$ be the oriented sphere satisfying 
\begin{align*}
P_{\ell} \cap Z_{c+1} &= \Delta_{\ell}, & P_{\ell} \cap Z_{c+2} &= \phi_{c+1}(\Delta_{\ell})
\end{align*}
that results from applying
Corollary~2.2 of Tuffley~\cite{ramsey2013} to the pairs $(Z_{c+1},D_{c+1})$ and
$(Z_{c+2},D_{c+2}')$.
The vertices of these spheres all lie on $D_{c+1}\cup D_{c+2}'$, and for any $1\leq\mu\leq\nu\leq\lambda$, the chain $\sum_{\ell=\mu}^\nu P_\ell$ represents a sphere meeting $D_{c+1}$ in the disc 
$\bigcup_{\ell=\mu}^\nu \Delta_\ell$, and $D_{c+2}'$ in the disc
$\bigcup_{\ell=\mu}^\nu \phi_{c+1}(\Delta_\ell)$.

For $1\leq\ell\leq\lambda$ we consider the sums
\[
\sum_{i=1}^\ell\link{P_i}{\XX\cup\YY}
\]
modulo $q$. As above there are $q^{S+T}$ possibilities for these modulo $q$, and we have $\lambda\geq 2^{S+T}q^{S+T}$ of them, so we can either find $2^{S+T}$ of them that are identically zero mod $q$, or $2^{S+T}+1$ of them that are equal mod $q$. In either case there are integers $0\leq\mu_0<\mu_1<\cdots<\mu_{2^{S+T}}$ such that the vectors
\[
\vect{p}_{j}=\sum_{i=\mu_0+1}^{\mu_j} \link{P_i}{\XX\cup\YY}
\]
are identically zero mod $q$ for $1\leq j\leq2^{S+T}$. 

Set $\vect{p}_0=\vect{0}$, and apply Lemma~\ref{forbiddenvalues.lem} to the vectors $\vect{p}_0,\vect{p}_1,\ldots,\vect{p}_{2^{S+T}}\in\real^{S+T}$ with $\vect{f}=\vect{z}_c$. This yields indices $0\leq j<k\leq 2^{S+T}$ such that no entry of 
\[
\vect{z}_c+\vect{p}_{k}-\vect{p}_j=\vect{z}_c+\sum_{i=\mu_j+1}^{\mu_k} \link{P_i}{\XX\cup\YY}
\]
is zero. Moreover, the vectors $\vect{z}_c$, $\vect{p}_j$ and $\vect{p}_k$ are all identically zero mod $q$, so every entry of 
$\vect{z}_c+\vect{p}_{k}-\vect{p}_j$ is a nonzero multiple of $q$. 

Let $F_{c+1}=\sum_{i=\mu_j+1}^{\mu_k} P_i$.
Then $F_{c+1}$ represents an $n$--sphere with all its vertices on
$Z_{c+1}\cup Z_{c+2}$, and meeting $Z_{c+1}$ and $Z_{c+2}$ in the discs
\begin{align*}
F_{c+1}\cap Z_{c+1} &= \bigcup_{i=\mu_j+1}^{\mu_k} \Delta_i\subseteq D_{c+1}, &
F_{c+1}\cap Z_{c+2} &= \phi_{c+1}\left(\bigcup_{i=\mu_j+1}^{\mu_k} \Delta_i\right)\subseteq D_{c+2}'.
\end{align*}
The construction of Corollary~2.2 of Tuffley~\cite{ramsey2013} ensures that these discs are oppositely oriented by $F_{c+1}$ and $Z_{c+1}\cup Z_{c+2}$, so conditions~(\ref{F-vertices.item}$'$) and~\eqref{F-discs.item} are satisfied; and with this choice of $F_{c+1}$ we have
$\vect{z}_{c+1} = \vect{z}_c+\vect{p}_k-\vect{p}_j$, so condition~\eqref{F-sum.item} is too. This completes the inductive step, and we now obtain the required sphere $Z$ as described above.
\end{proof}

\section{Proof of Theorem~\ref{modq.th}}
\label{main.sec}

We're now in a position to prove our main result, Theorem~\ref{modq.th}. The strategy is that of Flapan\etal's proof of their Theorem~1.

\begin{proof}[Proof of Theorem~\ref{modq.th}]
  Following \flapanetal, for each $u,v\in\naturals$ let $H(u,v)$ denote the complete $(u+2)$--partite graph with parts $P_1$ and $P_2$ containing $v$ vertices each, and parts $Q_1,\ldots,Q_u$ containing a single vertex each. We will prove by induction on $u$ that for every $u\geq 0$ and $v,\ell\geq1$, for $N$ sufficiently large every embedding of \complete{N}\ in \ambient\ contains a link $\LL$ such that
\begin{enumerate}
\renewcommand{\labelenumi}{(L\arabic{enumi})}
\renewcommand{\theenumi}{(L\arabic{enumi})}
\item\label{linkingpattern.item}
the linking pattern of \LL\ contains the graph $H(u,v)$;
\item\label{linkingnumber.item}
the linking number between any two distinct components in $Q_1\cup\cdots\cup Q_u$ is a nonzero multiple of $q$; and
\item\label{path.item}
every component in $P_1\cup P_2$ contains disjoint oppositely oriented copies of a \path\ $D$ of length at least $\ell$.
\end{enumerate}
For simplicity, we will say that a link \LL\ satisfying conditions~\ref{linkingpattern.item}--\ref{path.item} with the given parameter values satisfies property $(u,v,\ell)$. 

The base case $u=0$ follows from Proposition~\ref{bipartite.prop} with $r=v$, by choosing $D$ to be a \path\ of length $\ell$. Suppose then that the claim holds for some $u\geq 0$. Given $v,\ell\geq 0$, let 
\begin{align*}
S     &= v, \\
T     &= u+v, \\
A = B &= 2^{T}3^S(S+T)q^{S+T} 
         \geq 2^Sq^{S+T}, \\
\lambda &= \max\{\ell,(2q)^{S+T}\},
\end{align*}
and let $w=S+A=S+B$. By our inductive hypothesis, for $N$ sufficiently large every embedding of \complete{N}\ in \ambient\ contains a link \LL\ satisfying property $(u,w,\lambda)$. We will show that every such embedding also contains a link $\LL'$ satisfying property $(u+1,v,\ell)$.

Given an embedding of \complete{N}\ in \ambient\ and a link \LL\ contained in it satisfying property $(u,w,\lambda)$, label the components of \LL\ such that
\begin{align*}
P_1 &= \{X_1,\ldots,X_S,L_1,\ldots,L_B\}, \\
P_2 &= \{Y_1,\ldots,Y_S,J_1,\ldots,J_A\},
\end{align*}
and $Q_i=\{Y_{v+i}\}$ for $1\leq i\leq u$. Then all linking numbers $\link{J_a}{X_s}$ and $\link{L_b}{Y_t}$ are nonzero by~\ref{linkingpattern.item}, and every component $J_a$, $L_b$ contains two disjoint copies of a path \disc\ of length at least $\lambda \geq (2q)^{S+T}$, by~\ref{path.item}. So we may apply Proposition~\ref{stitchinglinks.prop} to \LL\ to obtain a sphere $Z$ with all its vertices on $J_1\cup\cdots\cup J_A\cup L_1\cup\cdots\cup L_B$ and linking every component $X_s,Y_t$ with linking number a nonzero multiple of $q$. Let 
\begin{align*}
\LL'&=X_1\cup\cdots\cup X_S\cup Y_1\cup\cdots\cup Y_T\cup Z \\
    &=X_1\cup\cdots\cup X_v\cup Y_1\cup\cdots\cup Y_{u+v}\cup Z,
\end{align*}
and partition the components as $P_1'\cup P_2'\cup Q_1'\cup\cdots\cup Q_{u+1}'$ such that
\begin{align*}
P_1' &= \{X_1,\cdots,X_v\}, \\
P_1' &= \{Y_1,\cdots,Y_v\}, 
\end{align*}
and
\[
Q_i'  = \begin{cases}
        \{Y_{v+i}\} & 1\leq i\leq u, \\
        \{Z\}       & i=u+1.
        \end{cases}
\]
Then with respect to this partition the linking pattern of $\LL'$ contains the graph $H(u+1,v)$; any two components in $Q_1'\cup\cdots\cup Q_{u+1}'$ have linking number a nonzero multiple of $q$; and every component in $P_1\cup P_2$ contains a copy of \disc, which is a path of length at least $\lambda\geq\ell$. So $\LL'$ satisfies property $(u+1,v,\ell)$, completing the inductive step. By~\ref{linkingnumber.item} the result now follows by restricting attention to $Q_1\cup\cdots\cup Q_u$, with $u=r$.
\end{proof}

\section{The two component case}
\label{2component.sec}

We now turn to the two component case, and establish the improved bound of Theorem~\ref{2component.th}. 

From the proof of~\cite[Theorem~1.4]{ramsey2013} it suffices to prove every embedding of \complete{\key{q}}\ contains a generalised key ring with $q$ keys each large with respect to a path $D$ of length $q$. The approach of~\cite{ramsey2013} was to work with a subdivision of \complete{N}, in which each $n$-simplex was subdivided into $q^n$ simplices. This is a fairly extravagant approach, since only $2q$ $n$-simplices from each component are used to form the required paths. 
The reduction in the number of vertices required comes from Lemma~\ref{adddisc.lem}, which gives us a simple and economic way to enlarge the keys of an existing generalised key ring. A further modest saving comes from ``recycling'' some of the vertices leftover from the construction of the initial key ring.

\begin{lemma}
\label{adddisc.lem}
Let \complete{N}\ be embedded in \ambient\ such that it contains a link $X\cup Y$ with $\link{X}{Y}\neq0$. Let $D$ be a triangulated $n$--disc with $d$ vertices, and suppose that $V$ is a set of $2d-(n+1)$ vertices of $\complete{N}$ disjoint from $X\cup Y$. Then \complete{N}\ contains a $D$-large sphere $Z$ with all its vertices on $Y\cup V$ such that $\link{X}{Z}\neq0$.

The result also holds with all linking numbers calculated mod 2.
\end{lemma}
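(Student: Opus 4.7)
The plan is to construct $Z$ as a connect sum of $Y$ with an auxiliary sphere $S$ isomorphic to $\partial(D\times I)$ --- the same combinatorial sphere used in the proof of Lemma~\ref{triangulatedsphere.lem}, which contains two disjoint oppositely oriented copies of $D$ and has exactly $2d$ vertices. First, I would pick any $n$-simplex $\delta\subset Y$, and an $n$-simplex $\delta_*$ of $\partial(D\times I)$ lying in the cylinder $\partial D\times I$; then $\delta_*$ has $n+1$ vertices and is disjoint from both $D\times\{0\}$ and $D\times\{1\}$ as a top-dimensional simplex. Next, identify the vertices of $\delta_*$ with those of $\delta$, and the remaining $2d-(n+1)$ vertices of $\partial(D\times I)$ bijectively with the vertices of $V$. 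Since $\complete{N}$ contains every $n$-simplex on its vertex set, this realizes $\partial(D\times I)$ as a sub-complex $S\subseteq\complete{N}$ meeting $Y$ in exactly $\delta$, automatically PL-embedded in $\ambient$ via the given embedding of $\complete{N}$.

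Form $Z=(Y\cup S)\setminus\mathrm{int}(\delta)$, orienting $S$ so that $Z$ inherits a consistent orientation from $Y$. Then $Z$ is a PL-embedded $n$-sphere with all vertices on $Y\cup V$ as required, and the two copies of $D$ coming from $S$ --- which lie in $D\times\{0\}$ and $D\times\{1\}$, and hence are unaffected by the removal of $\mathrm{int}(\delta)\subset\partial D\times I$ --- give the two disjoint oppositely oriented copies of $D$ that make $Z$ $D$-large.

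The main obstacle will be verifying the linking condition $\link{X}{Z}\neq 0$. As chains one has $Z=Y+S$ after identifying the two copies of $\delta$, so $[Z]=[Y]+[S]$ in $H_n(\ambient-X;\integer)$ and hence $\link{X}{Z}=\link{X}{Y}+\link{X}{S}$. The plan is to establish $[S]=0$ in $H_n(\ambient-X;\integer)$, which would give $\link{X}{Z}=\link{X}{Y}\neq 0$. Combinatorially $S$ bounds the $(n+1)$-ball $D\times I$, whose $(n+1)$-simplices all have vertex sets in $Y\cup V$ and are therefore vertex-disjoint from $X$; I expect that after possibly adjusting the choice of $\delta$ or the bijection with $V$, one can extend the embedding of $S$ to a PL embedding of $D\times I$ into $\ambient-X$, certifying that $S$ bounds in the complement of $X$. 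The mod 2 case runs in parallel, working throughout with $\integer/2\integer$ coefficients, where the same bounding argument gives $\linktwo{X}{S}=0$ and hence $\linktwo{X}{Z}=\linktwo{X}{Y}\neq 0$.
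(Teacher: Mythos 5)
There is a genuine gap at exactly the point you flag yourself: the claim that one can arrange $[S]=0$ in $H_n(\ambient-X;\integer)$. This cannot be established, and in general it is simply false. First, $\complete{N}$ is only the $n$--skeleton, so the given embedding provides no embedding of the $(n+1)$--simplices of $D\times I$; you would have to produce a new $(n+1)$--chain bounding $S$. Second, and more fundamentally, for any generic PL extension of the embedding of $S$ to an $(n+1)$--ball, the algebraic intersection number of that ball with $X$ is precisely $\link{X}{S}$ --- a topological invariant of the embedded sphere $S$, not something you can perturb away or fix by reassigning the bijection with $V$ or the choice of $\delta\subset Y$. Nothing in the hypotheses constrains the value of $\link{X}{S}$, so committing to $Z=S+Y$ and hoping $\link{X}{S}=0$ leaves the proof incomplete.

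The fix is a small but essential change of strategy, and it is what the paper actually does. After constructing $S\cong\partial(D\times I)$ with vertices on $\Delta\cup V$ and orienting it so $\Delta$ receives opposite orientations from $S$ and $Y$, consider \emph{both} candidate chains $S$ and $T=S+Y$. Each represents a $D$--large $n$--sphere with all vertices on $Y\cup V$ (note $S$ alone already qualifies, since its vertices lie on $\Delta\cup V\subseteq Y\cup V$ and it contains $D\times\{0\}$ and $D\times\{1\}$). Since
\[
[T]-[S]=[Y]\neq 0 \quad\text{in }H_n(\ambient-X),
\]
at least one of $\link{X}{S}$, $\link{X}{T}$ is nonzero, and you take $Z$ to be whichever works. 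This sidesteps the need to control $\link{X}{S}$ at all. The identical argument with $\integer/2\integer$ coefficients handles the mod $2$ case.
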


\begin{proof}
Choose an $n$-simplex $\Delta$ belonging to $Y$, and let $S=\partial(D\times I)$ with the triangulation with $2d$ vertices from the proof of
Lemma~\ref{triangulatedsphere.lem}. Then $\Delta\cup V$ contains a total of $(n+1)+(2d-(n+1))=2d$ vertices, so we may embed $S$ in \complete{N}\ such that all vertices of $S$ lie on $\Delta\cup V$ and $\Delta$ is an $n$--simplex of $\partial D\times I$. Orient $S$ such that $\Delta$ receives opposite orientations from $S$ and $Y$, and consider the chains $S$ and $T=S+Y$. Both represent $D$-large $n$--spheres with all their vertices on $Y\cup V$, and the linking numbers $\link{X}{S}$, $\link{X}{T}$ cannot both be zero because in the homology group $H_n(\ambient-X)$ we have
\begin{equation}
\label{T-S.eq}
[T]-[S] = [S+Y]-[S]=[Y]\neq 0.
\end{equation}
We may therefore choose one of $S$ and $T$ to be $Z$ so that $\link{X}Z\neq0$.

If $\linktwo{X}{Y}\neq0$ then equation~\eqref{T-S.eq} holds in $H_n(\ambient-X;\integer/2\integer)$, and we may again choose $Z$ to be one of $S$ and $T$ so that $\linktwo{X}{Z}\neq0$.
\end{proof}

\begin{corollary}
\label{largekeys.cor}
Let $q$ be a positive integer. Then every embedding of \complete{\key{q}}\ in \ambient\ contains a generalised key ring in which each key is large with respect to a path $D$ of length $q$.
\end{corollary}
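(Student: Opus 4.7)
The plan is two-stage. First I would construct an ordinary generalised key ring with $q$ keys in \complete{\key{q}}, and then I would enlarge each key to be $D$-large using Lemma~\ref{adddisc.lem}, drawing on vertices left unused by the first stage. The whole construction must fit within $\key{q}$ vertices.

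For Stage~1 I would use the standard construction. For $n\geq 2$, partition the $\key{q}=4q^2(2n+4)+\lceil(4q^2-2)/n\rceil+n+1$ vertices of \complete{\key{q}}\ into $4q^2$ disjoint copies of \complete{2n+4}\ together with the vertex set of an $n$-sphere $L$ realised as the boundary of an $(n+1)$-path of length $\lceil(4q^2-2)/n\rceil$. An $(n+1)$-path of length $\ell$ has $\ell+n+1$ vertices and its boundary contains $\ell n+2$ many $n$-simplices, so this $L$ uses $\lceil(4q^2-2)/n\rceil+n+1$ vertices and has at least $4q^2$ $n$-simplices. Taniyama's theorem yields a 2-component link $J_i\cup X_i$ in each copy of \complete{2n+4}\ with $\linktwo{J_i}{X_i}=1$, and applying the original form of Lemma~\ref{makekeyring.lem}, namely Lemma~3.2 of~\cite{ramsey2013} (which does not require $D$-largeness of $L$), with $m=2q$ produces a ring $R$ with vertices on $L\cup J_1\cup\cdots\cup J_{4q^2}$ and an index set $I$ with $|I|\geq q$ such that $\linktwo{R}{X_i}=1$ for each $i\in I$. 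The $n=1$ case is handled similarly, using the analogous Lemma~1 of~\cite{flapan-mellor-naimi2008} applied to $4q^2$ copies of $K_6$; no auxiliary $n$-sphere is needed, and the total vertex count is $24q^2=\key{q}$.

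Selecting $q$ indices from $I$ gives the initial key ring $R\cup K_1\cup\cdots\cup K_q$, and I would then turn to Stage~2. The $4q^2-q$ unused components $X_i$ lie in copies of \complete{2n+4}\ disjoint from those containing the chosen keys and are disjoint from $R$ by construction, providing a pool of at least $(4q^2-q)(n+2)$ leftover vertices disjoint from $R\cup K_1\cup\cdots\cup K_q$. With $D$ a path of length $q$ (so $d=q+n$), Lemma~\ref{adddisc.lem} requires $2d-(n+1)=2q+n-1$ auxiliary vertices per enlargement. Since $q(2q+n-1)\leq(4q^2-q)(n+2)$ for all $n,q\geq 1$, I can choose pairwise disjoint subsets $V_1,\ldots,V_q$ of the pool, each of size $2q+n-1$, and in turn apply Lemma~\ref{adddisc.lem} with $X=R$, $Y=K_j$, $V=V_j$ (noting that $\link{R}{K_j}\neq 0$ since $\linktwo{R}{K_j}=1$) to replace each $K_j$ by a $D$-large sphere $K_j'$ on $K_j\cup V_j$ satisfying $\link{R}{K_j'}\neq 0$.

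Since $V_1,\ldots,V_q$ are pairwise disjoint and disjoint from $R$ and every $K_j$, the components $R,K_1',\ldots,K_q'$ have pairwise disjoint vertex sets and are therefore pairwise disjoint as subcomplexes of \complete{\key{q}}, yielding the desired generalised key ring with $q$ $D$-large keys. The main difficulty is the vertex accounting: verifying that the leftover pool is both large enough and genuinely disjoint from the initial key ring. This reduces to the numerical inequality above together with the observation that the unused $X_i$ components lie in copies of \complete{2n+4}\ separate from those containing the chosen keys, and that $R$'s vertices lie on $L\cup J_1\cup\cdots\cup J_{4q^2}$, hence avoid the $X_i$'s entirely.
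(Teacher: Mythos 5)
Your argument is correct and takes essentially the same route as the paper: both build the initial generalised key ring via the construction underlying Theorem~1.2 of~\cite{ramsey2013} (Flapan~\etal's Lemma~1 for $n=1$), then apply Lemma~\ref{adddisc.lem} $q$ times to enlarge the keys, drawing the auxiliary vertex sets from the unused $X_i$. The vertex accounting is the same, with the paper phrasing it per key as $(4q-1)(n+2)\geq 2d-(n+1)$, which is equivalent to your aggregate inequality $q(2q+n-1)\leq(4q^2-q)(n+2)$.
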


\begin{proof}
By~\cite[Theorem 1.2]{ramsey2013} every embedding of \complete{\key{q}}\ in \ambient\ contains a generalised key ring \LL\ with $q$ keys. This link is constructed by applying~\cite[Lemma~3.2]{ramsey2013} (the extension of~\cite[Lemma~1]{flapan-mellor-naimi2008} to higher dimensions) to a link
\[
L\cup J_1\cup\cdots\cup J_{4q^2}\cup K_1\cup\cdots\cup K_{4q^2},
\]
in which $\linktwo{J_i}{K_i}$ is nonzero for all $i$, and each component $J_i,K_i$ is the boundary of an $(n+1)$--simplex. This yields an $n$--sphere $R$ with all vertices on $L\cup J_1\cup\cdots\cup J_{4q^2}$ and linking at least $q$ of the $K_i$, which forms the ring of the generalised key ring. 
Let $K_{i_1},\ldots,K_{i_q}$ be the keys. 

Recall that a path $D$ of length $q$ can be constructed using as few as $d=q+n$ vertices. Since only $q$ of the $K_i$ are components of \LL\ this leaves at least $(4q^2-q)(n+2)=q(4q-1)(n+2)$ vertices of \complete{\key{q}}\ that do not belong to \LL. Observe that
\[
(4q-1)(n+2) = (4q-1)n+8q-2\geq 2n+2q = 2d > 2d-(n+1).
\]
The spare vertices are therefore more than enough to apply Lemma~\ref{adddisc.lem} $q$ times to $R$ and each key $K_{i_j}$ in turn, replacing $K_{i_j}$ with a $D$-large sphere $Z_j$ that still links $R$. Then
\[
R\cup Z_1\cup\cdots\cup Z_q
\]
is the desired link.
\end{proof}

For completeness' sake we sketch the steps needed to prove Theorem~\ref{2component.th} from this point. For any missing details see the proof of~\cite[Thm~1.4]{ramsey2013}, or the corresponding step of the proof of Proposition~\ref{stitchinglinks.prop}.

\begin{proof}[Proof of {Theorem~\ref{2component.th}}]
By Corollary~\ref{largekeys.cor}, every embedding of \complete{\key{q}}\ in \ambient\ contains a generalised key ring
$R\cup Z_1\cup\cdots\cup Z_q$ such that each key $Z_i$ is large with respect to a path $D$ of length $q$. Orient the $Z_i$ so that all linking numbers with $R$ are positive. Working in the homology group $H_n(\ambient-R;\integer)$, let $1\leq a\leq b\leq q$ be such that 
\[
\sum_{i=a}^b [Z_i] \equiv 0\bmod q,
\]
and note that this sum is positive. From now on we restrict our attention to the spheres $Z_a,\ldots,Z_b$.

If $a=b$ we are done. Otherwise, we use the fact that each component $Z_i$ is $D$-large to construct oriented spheres $F_a,\ldots,F_{b-1}$ such that, for $a\leq i\leq b-1$,
\begin{enumerate}
\renewcommand{\labelenumi}{(\alph{enumi})}
\renewcommand{\theenumi}{\alph{enumi}}
\item
the vertices of $F_i$ lie on $Z_i\cup Z_{i+1}$ (and so $F_i$ is disjoint from $R$ and the rest of the $Z_j$);
\item
$F_{i-1}\cap Z_i$ and $F_i\cap Z_i$ are disjoint discs, each of which is oppositely oriented by $Z_i$ and $F_{i-1}$ or $F_i$;
\item\label{Fmodq.item}
the linking number $\link{R}{F_i}$ is zero mod $q$. 
\end{enumerate}
The construction of the $F_i$ is identical to that of the corresponding spheres in Proposition~\ref{stitchinglinks.prop}, except that the simpler condition~\eqref{Fmodq.item} means we only require $D$ to have length $q$, and the spheres can all be constructed simultaneously instead of inductively. 
Now if $\link{R}{F_i}$ is nonzero for some $i$ then $R\cup F_i$ is the required link; and otherwise, we let $Z$ be the connect sum of $Z_a,\ldots,Z_b,F_a,\ldots,F_{b-1}$ obtained by omitting the interiors of the discs $F_i\cap Z_i$ and $F_i\cap Z_{i+1}$ for each $i$. Then $Z$ is an $n$--sphere, and in $H_n(\ambient-R)$ we have
\[
[Z] = \sum_{i=a}^b [Z_i] + \sum_{i=a}^{b-1}[F_i] = \sum_{i=a}^b [Z_i],
\]
which is a nonzero multiple of $q$
\end{proof}

\bibliographystyle{plain}
\bibliography{linking}

\begin{thebibliography}{10}

\bibitem{conway-gordon1983}
J.~H. Conway and C.~McA. Gordon.
\newblock Knots and links in spatial graphs.
\newblock {\em J. Graph Theory}, 7(4):445--453, 1983.

\bibitem{flapan2002}
Erica Flapan.
\newblock Intrinsic knotting and linking of complete graphs.
\newblock {\em Algebr. Geom. Topol.}, 2:371--380 (electronic), 2002.
\newblock E-print arXiv:math/0205231v1.

\bibitem{flapan-mellor-naimi2008}
Erica Flapan, Blake Mellor, and Ramin Naimi.
\newblock Intrinsic linking and knotting are arbitrarily complex.
\newblock {\em Fund. Math.}, 201(2):131--148, 2008.
\newblock E-print arXiv:math/0610501v6.

\bibitem{flapan-pommersheim-foisy-naimi2001}
Erica Flapan, James Pommersheim, Joel Foisy, and Ramin Naimi.
\newblock Intrinsically {$n$}-linked graphs.
\newblock {\em J. Knot Theory Ramifications}, 10(8):1143--1154, 2001.

\bibitem{fleming2007}
Thomas Fleming.
\newblock Intrinsically linked graphs with knotted components.
\newblock {\em J. Knot Theory Ramifications}, 21(7):1250065, 10, 2012.
\newblock E-print arXiv:0705.2026.

\bibitem{fleming-diesl2005}
Thomas Fleming and Alexander Diesl.
\newblock Intrinsically linked graphs and even linking number.
\newblock {\em Algebr. Geom. Topol.}, 5:1419--1432 (electronic), 2005.
\newblock E-print arXiv:math/0511133v1.

\bibitem{lovasz-schrijver1998}
L{\'a}szl{\'o} Lov{\'a}sz and Alexander Schrijver.
\newblock A {B}orsuk theorem for antipodal links and a spectral
  characterization of linklessly embeddable graphs.
\newblock {\em Proc. Amer. Math. Soc.}, 126(5):1275--1285, 1998.

\bibitem{sachs1983}
Horst Sachs.
\newblock On a spatial analogue of {K}uratowski's theorem on planar graphs ---
  an open problem.
\newblock In {\em Graph theory (\L ag\'ow, 1981)}, volume 1018 of {\em Lecture
  Notes in Math.}, pages 230--241. Springer, Berlin, 1983.

\bibitem{taniyama2000}
Kouki Taniyama.
\newblock Higher dimensional links in a simplicial complex embedded in a
  sphere.
\newblock {\em Pacific J. Math.}, 194(2):465--467, 2000.

\bibitem{ramsey2013}
Christopher Tuffley.
\newblock Some {R}amsey-type results on intrinsic linking of $n$-complexes.
\newblock {\em Alg. Geom. Topology}, 13(3):1579--1612, 2013.
\newblock E-print arXiv:1112.4558v3.

\end{thebibliography}

\end{document}